\newtheorem{thm}{Theorem}[section]
\newtheorem{pro}[thm]{Proposition}
\newtheorem{cor}[thm]{Corollary}
\newtheorem{lem}[thm]{Lemma}
\theoremstyle{definition}
\newtheorem{rem}[thm]{Remark}       
\renewcommand{\square}{\relax}    
\newcommand{\p}{{\mathbb P}}
\newcommand{\an}{{\mathcal O}}
\begin{document}

\title{Varieties of Picard rank one as components of ample divisors}

\author{Andrea Luigi Tironi}

\date{}

\maketitle

\begin{abstract}
Let $\mathcal{V}$ be an integral normal complex projective variety
of dimension $n\geq 3$ and denote by $\mathcal{L}$ an ample line
bundle on $\mathcal{V}$. By imposing that the linear system
$|\mathcal{L}|$ contains an element $A=A_{1}+...+A_{r}, r\geq 1$,
where all the $A_{i}$'s are distinct effective Cartier divisors
with Pic$(A_i)=\mathbb{Z}$, we show that such a $\mathcal{V}$ is
as special as the components $A_i$ of $A\in |\mathcal{L}|$. After
making a list of some consequences about the positivity of the
components $A_i$, we characterize pairs $(\mathcal{V},
\mathcal{L})$ as above when either $A_1\cong\mathbb{P}^{n-1}$ and
Pic$(A_j)=\mathbb{Z}$ for $j=2,...,r,$ or $\mathcal{V}$ is smooth
and each $A_i$ is a variety of small degree with respect to
$[H_i]_{A_i}$, where $[H_i]_{A_i}$ is the restriction to $A_i$ of
a suitable line bundle $H_i$ on $\mathcal{V}$.\footnote{2010
\textit{Mathematical Subject Classification}.\ 14C20, 14C22;
14J40, 14J45.\\ \textit{Key words and phrases}.\ Complex
projective varieties, ample line bundles, reducible divisors,
small degree.}
\end{abstract}

\section{Introduction}\label{intro}

Projective manifolds with an irreducible hyperplane section being a
special variety have been studied since longtime (see, e.g., \cite{BS} and \cite{F1}), but the corresponding study for a
reducible hyperplane section consisting of a simple normal crossing
divisor whose components are special varieties started only recently
by Chandler, Howard and Sommese \cite{CHS}. Therefore, we continue
here the study of varieties in terms of a hyperplane section $A$
which is not irreducible, assuming that $A$ is a union of distinct irreducible
components $A_1,...,A_r$, with $r\geq 1$. More precisely, let
$\mathcal{V}$ be an integral normal complex projective variety of
dimension $n\geq 3$ endowed with an ample line bundle $\mathcal{L}$.
Assume that

\begin{enumerate}
\item[$(\diamondsuit)$] $|\mathcal{L}|$ contains an element
$A=A_{1}+...+A_{r}, r\geq 1$, where all the components $A_{i}$ are
distinct and effective Cartier divisors with Pic$(A_i)$ of rank one.
\end{enumerate}

\noindent Let us observe here that this assumption is a natural generalization
of the classical hypothesis $\mathrm{Pic}(A')\cong\mathbb{Z}$ on a
hyperplane section $A'$ of $\mathcal{V}$. Furthermore, if $(\diamondsuit)$ holds
then every component $A_i$ of $A\in |\mathcal{L}|$ does not admit a non-trivial
morphism onto a variety $W_i$ with $0<\dim W_i<n-1$ and in general, also for the smooth case,
the main known results on reducible hyperplane sections (see, e.g., \cite{CHS}) can not be
applied on any of the $A_i$'s. However, we can show that if a reducible subvariety $A$
as in $(\diamondsuit)$ is contained in $\mathcal{V}$ as an ample divisor, then it
imposes severe restrictions to $\mathcal{V}$, that is, the
topological and geometric structures of $\mathcal{V}$ are very
closely related to those of each component of $A$.

So, in $\S$\ref{Proof} we prove first the following

\begin{thm}\label{thm-1}
Let $\mathcal{V}$ be an integral normal complex projective variety
of dimension $n\geq 3$ and let $\mathcal{L}$ be an ample line bundle
on $\mathcal{V}$. Assume that $(\diamondsuit)$ holds. Then all the
$A_i$'s are nef and big Cartier divisors on $\mathcal{V}$ and for
any $i=1,...,r$ there exist proper birational morphisms
$f_i:\mathcal{V}\to \mathcal{V}_i$ from $\mathcal{V}$ to a
projective normal variety $\mathcal{V}_i$ given by the map
associated to $|\mathcal{O}_{\mathcal{V}}(m_iA_i)|$ for some
$m_i>>0$. Furthermore, each map $f_i$ contracts at most a finite
number of curves on $\mathcal{V}$ and it is an isomorphism in a
neighborhood of $A_i$ such that $f_i(A_i)$ is an effective ample
divisor on $\mathcal{V}_i$.
\end{thm}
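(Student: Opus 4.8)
The plan is to proceed in four stages: (1) show that each $A_i$ is nef and big; (2) upgrade this to semiampleness of $\mathcal{O}_{\mathcal{V}}(A_i)$ and produce the birational morphism $f_i$; (3) check that $f_i$ is an isomorphism near $A_i$ with $f_i(A_i)$ ample on $\mathcal{V}_i$; and (4) bound the exceptional locus of $f_i$. The case $r=1$ is trivial, since then $\mathcal{O}_{\mathcal{V}}(A_1)=\mathcal{L}$, so assume $r\geq 2$.

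For (1) the basic input is the elementary remark that a nonzero effective Cartier divisor on an integral projective variety whose Picard group has rank one is automatically ample: such a class is not torsion (a torsion effective class is trivial, as $kD\geq 0$ would be the divisor of a regular function) and is numerically a positive multiple of the ample generator, because a nonzero effective divisor cannot be numerically trivial on a projective variety. Applying this on each $A_i$, I would write $\mathcal{O}_{\mathcal{V}}(A_j)|_{A_i}\equiv a_{ij}h_i$ with $h_i$ the ample generator of $N^1(A_i)$; for $j\neq i$ the restriction is effective, so $a_{ij}\geq 0$, and $a_{ij}>0$ precisely when $A_i\cap A_j\neq\emptyset$. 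Since $\mathcal{L}|_{A_i}\equiv\big(\sum_j a_{ij}\big)h_i$ is ample one has $\sum_j a_{ij}>0$, so if $a_{ii}\leq 0$ then $\sum_{j\neq i}a_{ij}>0$ and some $A_k$ with $k\neq i$ meets $A_i$; because $n\geq 3$ the scheme $A_i\cap A_k$ has pure dimension $n-2\geq 1$ and contains an irreducible curve $C$. Comparing $A_i\cdot C$ computed on $A_i$, which equals $a_{ii}(h_i\cdot C)\leq 0$, with the same number computed on $A_k$, which is positive because $\mathcal{O}_{\mathcal{V}}(A_i)|_{A_k}$ is a nonzero effective (hence ample) divisor on $A_k$, gives a contradiction; so $a_{ii}>0$, i.e. $\mathcal{O}_{\mathcal{V}}(A_i)|_{A_i}$ is ample on $A_i$. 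Then $A_i$ is nef on $\mathcal{V}$ (a curve outside $A_i$ meets the effective divisor $A_i$ nonnegatively, a curve inside $A_i$ is controlled by the ample bundle $\mathcal{O}_{\mathcal{V}}(A_i)|_{A_i}$), and big because $A_i^{\,n}=(\mathcal{O}_{\mathcal{V}}(A_i)|_{A_i})^{n-1}=a_{ii}^{\,n-1}h_i^{\,n-1}>0$.

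For (2) I would tensor $0\to\mathcal{O}_{\mathcal{V}}(-A_i)\to\mathcal{O}_{\mathcal{V}}\to\mathcal{O}_{A_i}\to 0$ by $\mathcal{O}_{\mathcal{V}}(mA_i)$; since $\mathcal{O}_{\mathcal{V}}(A_i)|_{A_i}$ is ample, for $m\gg 0$ the right-hand term $\mathcal{O}_{A_i}(mA_i|_{A_i})$ is globally generated and has vanishing $H^1$, and the cohomology sequences then give that $H^0(\mathcal{V},\mathcal{O}_{\mathcal{V}}(mA_i))\to H^0(A_i,\mathcal{O}_{A_i}(mA_i|_{A_i}))$ is surjective for $m\gg 0$. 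Hence $\mathrm{Bs}\,|mA_i|$ is disjoint from $A_i$, but it is also contained in $\mathrm{Supp}(mA_i)=A_i$, so it is empty and $\mathcal{O}_{\mathcal{V}}(A_i)$ is semiample; taking $m_i\gg 0$ suitably divisible, the morphism $f_i$ attached to $|m_iA_i|$ has connected fibres onto a normal projective variety $\mathcal{V}_i$, is birational (as $A_i$ is big), and satisfies $\mathcal{O}_{\mathcal{V}}(m_iA_i)=f_i^{*}\mathcal{A}_i$ with $\mathcal{A}_i$ ample. For (3): for $m_i\gg 0$ the system cut out on $A_i$ by $|m_iA_i|$ is very ample (using the surjectivity above), so $f_i|_{A_i}$ is a closed embedding; and any curve meeting $A_i$ has positive intersection with $A_i$, hence is not contracted, so every positive-dimensional fibre of $f_i$ is disjoint from $A_i$. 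Therefore $f_i$ is an isomorphism over a neighbourhood of $B_i:=f_i(A_i)$, which is then an effective Cartier divisor on $\mathcal{V}_i$ with $f_i^{*}B_i=A_i$; injectivity of $f_i^{*}$ on Picard groups forces $\mathcal{O}_{\mathcal{V}_i}(m_iB_i)=\mathcal{A}_i$ to be ample, so $B_i$ is ample on $\mathcal{V}_i$.

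Stage (4) is where I expect the real difficulty. The key auxiliary statement is that $A_i\cap A_j\neq\emptyset$ for all $i\neq j$; without it $f_i$ could contract a whole component $A_j$ (on which $\mathcal{O}_{\mathcal{V}}(A_i)$ would be trivial), so this is essential. To prove it I would use that $A=A_1+\cdots+A_r\in|\mathcal{L}|$ is an effective ample divisor, hence (as $n\geq 2$) has connected support. If $A_i\cap A_j=\emptyset$, split the components according to whether they meet $A_i$: on each component meeting $A_i$ the bundle $\mathcal{O}_{\mathcal{V}}(A_i)$ restricts to an ample bundle, so $f_i$ is finite there, while each component not meeting $A_i$ is contracted by $f_i$ to a point; connectedness then forces a component $A_s$ of the first type and a component $A_t$ of the second type with $A_s\cap A_t\neq\emptyset$, and then $A_s\cap A_t$ would lie in a finite fibre of $f_i|_{A_s}$, contradicting that it has pure dimension $n-2\geq 1$. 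Granting this, $f_i$ is finite on $\bigcup_j A_j$, so every positive-dimensional fibre $F$ of $f_i$ meets $A$ in only finitely many points; hence $F\setminus A$ is a closed subvariety of the affine variety $\mathcal{V}\setminus A$ (the complement of the ample divisor $A$), so it is affine, and since $F$ is complete this forces $\dim F\leq 1$. Finally the closed (hence complete) locus $Z\subseteq\mathcal{V}_i$ over which $f_i$ fails to be an isomorphism is disjoint from $B_i$, hence contained in the affine variety $\mathcal{V}_i\setminus B_i$, hence finite; so $f_i^{-1}(Z)$ is a finite union of curves and $f_i$ contracts at most finitely many curves. The two delicate points are thus the disjointness lemma — exactly where the hypothesis $n\geq 3$ and the connectedness of ample divisors come in — and the use of the affineness of $\mathcal{V}\setminus A$ together with the ampleness of $B_i$ to extract the two finiteness statements.
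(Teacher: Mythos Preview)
Your proof is correct. Stages (1)--(3) closely track the paper: it too shows $\mathcal{O}_{\mathcal{V}}(A_i)|_{A_i}$ is ample by restricting to an intersection $W=A_i\cap A_j$ (using connectedness of $A$), and then simply cites \cite[III~4.2]{Ha1} (equivalently \cite[(2.6.5)]{BS}) for the existence of $f_i$, the isomorphism near $A_i$, and the ampleness of $f_i(A_i)$---all of which you have reproved by hand via the standard cohomological descent argument.

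Stage (4) is where the two routes genuinely differ. For the key lemma $A_i\cap A_j\neq\emptyset$, the paper uses the Kodaira-type decomposition $A_i\simeq H_i+D_i$ with $H_i$ $\mathbb{Q}$-ample and $D_i$ $\mathbb{Q}$-effective (from \cite[(2.5.5)]{BS}) to get $A_i\cdot A_j\neq 0$ in one line, whereas you deduce it from connectedness of $A$ together with the dichotomy that $f_i$ is finite or constant on each $A_k$. For bounding the exceptional locus, the paper argues directly that any irreducible $Z$ with $\dim Z\geq 2$ meets some $A_k$ in dimension $\geq 1$, hence meets the ample divisor $W_{ik}=A_i\cap A_k\subset A_k$, hence meets $A_i$; then $\mathcal{O}_Z(Z\cap A_i)|_{Z\cap A_i}$ is ample and a second appeal to \cite[III~4.2]{Ha1} on $Z$ shows $f_i|_Z$ cannot drop dimension. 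Your alternative via the affineness of $\mathcal{V}\setminus A$ (to bound fibre dimension) and of $\mathcal{V}_i\setminus B_i$ (to bound the non-isomorphism locus) is equally valid and arguably more transparent about where the ampleness of $\mathcal{L}$ and of $B_i$ enter; it quietly uses that a complete variety of dimension $\geq 2$ minus a finite set is never affine, and that $f_i$ has connected fibres (so no positive-dimensional fibre can hit $B_i$). The paper's approach is shorter because it reuses the Hartshorne contraction criterion; yours is more self-contained.
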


The above result shows that assumption $(\diamondsuit)$ implies
that all the components $A_i$ of $A$ are either ample, or at worst
big and $1$-ample in the sense of \cite[(1.3)]{S2}. By imposing
some restrictions on the singularities of $\mathcal{V}$, we are
able to deduce that all the $A_i$'s are in fact ample Cartier
divisors on $\mathcal{V}$.

\begin{thm}\label{thm-2}
Let $\mathcal{V}$ be an integral normal complex projective variety
of dimension $n\geq 3$ with at worst Cohen-Macaulay singularities.
Let $\mathcal{L}$ be an ample line bundle on $\mathcal{V}$ and
assume that $(\diamondsuit)$ holds. Furthermore, suppose that
$\mathcal{V}-F$ is a locally complete intersection for some
finite, possibly empty, set
$F\subset\mathcal{V}-\mathrm{Irr}(\mathcal{V})$ with
$\dim\mathrm{Irr}(\mathcal{V})\leq 0$, where
$\mathrm{Irr}(\mathcal{V})$ is the set of irrational singularities
of $\mathcal{V}$. Then all the $A_i$'s are ample Cartier divisors
on $\mathcal{V}$ and the maps $f_i:\mathcal{V}\to\mathcal{V}_i$ of
{\em Theorem \ref{thm-1}} are all isomorphisms.
\end{thm}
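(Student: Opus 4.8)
The plan is to upgrade the conclusions of Theorem~\ref{thm-1} using the additional hypotheses on the singularities of $\mathcal{V}$. By Theorem~\ref{thm-1} we already know that each $A_i$ is nef and big, that $f_i:\mathcal{V}\to\mathcal{V}_i$ is a proper birational morphism contracting only finitely many curves, and that $f_i$ is an isomorphism near $A_i$ with $f_i(A_i)$ ample on $\mathcal{V}_i$. So it suffices to show that the exceptional locus $\mathrm{Exc}(f_i)$ is empty; then $f_i$ is a finite birational morphism onto a normal variety, hence an isomorphism by Zariski's main theorem, and ampleness of $A_i$ follows by pulling back the ample divisor $f_i(A_i)$.

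First I would recall that, since $f_i$ contracts at most finitely many curves, the exceptional locus $E_i:=\mathrm{Exc}(f_i)$ is a finite union of curves (possibly empty), and $f_i$ maps $E_i$ to a finite set of points $Z_i\subset\mathcal{V}_i$. The strategy is a local cohomological argument at the points of $Z_i$: if $E_i\neq\emptyset$, there is a point $p\in Z_i$ over which $f_i$ contracts at least one curve, and one derives a contradiction with the imposed Cohen--Macaulay / local complete intersection / rational-singularity conditions. Concretely, the Leray spectral sequence for $f_i$ together with $R^jf_{i*}\mathcal{O}_{\mathcal{V}}=0$ for $j>0$ — which holds away from the finitely many irrational singularities since $\mathcal{V}-F$ is a local complete intersection (hence has rational singularities in a neighborhood of the general point of $E_i$, as those points lie in $\mathcal{V}-\mathrm{Irr}(\mathcal{V})$ by the assumption $F\subset\mathcal{V}-\mathrm{Irr}(\mathcal{V})$) — forces $f_i$ to be an isomorphism on the smooth-enough locus. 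The Cohen--Macaulay hypothesis on $\mathcal{V}$ is what guarantees that $f_i(A_i)$ being ample Cartier on $\mathcal{V}_i$ pulls back to an honest Cartier divisor and that depth arguments (e.g.\ $\mathcal{O}_{\mathcal{V}}$ being $S_2$) can be used to control $f_{i*}\mathcal{O}_{\mathcal{V}}$ versus $\mathcal{O}_{\mathcal{V}_i}$.

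More precisely, I expect the argument to run as follows. Since $f_i$ is birational with normal target and $\mathcal{V}$ is Cohen--Macaulay, $f_{i*}\mathcal{O}_{\mathcal{V}}=\mathcal{O}_{\mathcal{V}_i}$. If $C\subset\mathcal{V}$ is a curve contracted by $f_i$ to a point $p$, then $C$ meets $A_i$ (because $A_i$ is ample on a neighborhood, or because $f_i(A_i)$ is ample on $\mathcal{V}_i$ and $C\cdot f_i^*f_i(A_i)=0$ would force $C\cdot A_i>0$ by the projection-type formula $f_i^*f_i(A_i)\sim A_i+(\text{stuff supported on }E_i)$, giving the needed contradiction once one knows $A_i|_C$ is trivial since $f_i$ is an iso near $A_i$ and $C\not\subset A_i$). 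Actually the cleanest route: $f_i$ is an isomorphism in a neighborhood of $A_i$, so any contracted curve $C$ is disjoint from $A_i$; but $A_i$ is nef and big, so for $m\gg 0$ the divisor $mA_i$ is the pullback of an ample divisor under $f_i$ (this is essentially the content of Theorem~\ref{thm-1}), whence $C\cdot A_i=0$; on the other hand, an ample Cartier divisor on an $n$-dimensional variety with $n\ge 3$ must meet every curve, so one leverages the CM/lci hypotheses plus vanishing to show $A_i$ is ample, contradiction — hence no contracted curve exists.

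The main obstacle, I expect, is the local analysis at the finitely many irrational singularities (the set $\mathrm{Irr}(\mathcal{V})$, of dimension $\le 0$) and at the finite set $F$ where $\mathcal{V}$ may fail to be a local complete intersection: precisely at those points the vanishing $R^jf_{i*}\mathcal{O}_{\mathcal{V}}=0$ and the identification $f_{i*}\mathcal{O}_{\mathcal{V}}=\mathcal{O}_{\mathcal{V}_i}$ need not follow formally, so one must argue that a contracted curve cannot be concentrated over such a bad point either. Here I would use that $\mathrm{Exc}(f_i)$ is a union of curves, hence one-dimensional, and a dimension/intersection count: a contracted curve through a bad point still has to be disjoint from the ample-enough $A_i$, and one plays the nef-and-bigness of $A_i$ (giving $A_i^n>0$ and a Nakai--Moishezon-type obstruction on $\mathcal{V}_i$ where $f_i(A_i)$ is genuinely ample) against the existence of $C$ with $C\cdot A_i=0$. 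Finalizing this requires being careful that the cohomological comparison only needs to be valid on $\mathcal{V}-(F\cup\mathrm{Irr}(\mathcal{V}))$, which is exactly where the lci hypothesis is assumed, and that the excluded finite set cannot support a positive-dimensional contracted locus meeting the argument. Once $\mathrm{Exc}(f_i)=\emptyset$ is established, $f_i$ is a proper birational bijective morphism onto a normal variety, hence an isomorphism, and $A_i\cong f_i^*(f_i(A_i))$ is ample Cartier, completing the proof.
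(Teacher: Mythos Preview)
Your proposal has a genuine gap: the cohomological route via $R^jf_{i*}\mathcal{O}_{\mathcal{V}}=0$ and rational singularities cannot, by itself, rule out a small contraction. The Atiyah flop already shows this: on a smooth threefold one can contract a single $\mathbb{P}^1$ to a point, the target has a rational (even terminal) singularity, all higher direct images of $\mathcal{O}$ vanish, and yet the map is not an isomorphism. So even in the best possible situation---$\mathcal{V}$ smooth, $F=\emptyset$, $\mathrm{Irr}(\mathcal{V})=\emptyset$---your strategy does not force $\mathrm{Exc}(f_i)=\emptyset$. The step ``one leverages the CM/lci hypotheses plus vanishing to show $A_i$ is ample, contradiction'' is exactly the conclusion you are trying to prove, and nothing in the Leray/depth analysis you sketch supplies it. You also never use the hypothesis $(\diamondsuit)$ beyond what Theorem~\ref{thm-1} already gives; but the assumption $\mathrm{Pic}(A_i)=\mathbb{Z}$ is essential and must enter again.

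The paper's proof is entirely different and is numerical rather than cohomological. Writing $\mathcal{O}_{\mathcal{V}}(A_i)_{A_j}=\mathcal{O}_{A_j}(a_{ij}\mathcal{H}_j)$, one computes $A_k^sA_j^{n-s}$ in two ways to obtain the key identity $a_{jk}a_{kj}=a_{jj}a_{kk}$. Expanding $A_j^2\mathcal{L}^{n-2}$, $A_k^2\mathcal{L}^{n-2}$ and $A_jA_k\mathcal{L}^{n-2}$ via $\mathcal{L}\simeq A_1+\cdots+A_r$ and using that identity term by term yields the equality
\[
(A_jA_k\mathcal{L}^{n-2})^2=(A_j^2\mathcal{L}^{n-2})(A_k^2\mathcal{L}^{n-2})>0.
\]
The singularity hypotheses on $\mathcal{V}$ enter precisely here: they are exactly what is required to apply the generalized Hodge index theorem \cite[(2.5.4)]{BS}, which from this equality forces $A_j\equiv\lambda_{jk}A_k$ for some $\lambda_{jk}\in\mathbb{Q}_{>0}$. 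Summing gives $\mathcal{L}\equiv\mu_kA_k$ with $\mu_k\geq 1$, so $A_k$ is ample and $f_k$ is an isomorphism. In short, the extra hypotheses are there to make a Hodge-index argument go through, not to control $R^jf_{i*}\mathcal{O}_{\mathcal{V}}$.
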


Furthermore, under some additional hypotheses on $\mathcal{V}$ and
on some $A_i$, we can finally obtain that
$\mathrm{Pic}(\mathcal{V})=\mathbb{Z}\langle\Lambda\rangle$ for an
ample line bundle $\Lambda$ on $\mathcal{V}$ (see Corollary
\ref{cor-1}).

All of these results allow us to list in $\S$\ref{consequences}
some consequences about the positivity of the $A_i$'s and to
obtain similar results as in \cite[Theorem 1]{Ba} (see also
\cite[Prop.VI]{S1}) for the case of reducible ample divisors on
$\mathcal{V}$ (see Propositions \ref{prop-2} and \ref{prop-3}).

We would like to note that the
above results make use of weak hypotheses on $\mathcal{V}$ and on
each $A_i$, and that $(\diamondsuit)$ seems optimal a priori for
Theorems \ref{thm-1} and \ref{thm-2}, since easy examples
show that these results do not hold assuming that
$\mathrm{Pic}(A_i)\neq\mathbb{Z}\langle\mathcal{H}_i\rangle$ for
some $i=1,...,r$, also when $r=2$ and $\mathcal{V}$ is smooth
(Remark \ref{remark-example}). Moreover, the techniques we employ in
$\S$\ref{Proof} leave out of account any special
polarization on each $A_i$ by a (very) ample line bundle on
$\mathcal{V}$ and they allow us to assume that $\mathcal{L}$ is
simply ample and not necessarily ample and spanned or very ample on
$\mathcal{V}$.

Finally, as a by-product of $\S$\ref{consequences} and some
results obtained by many other authors about smooth complex
projective variety $X$ containing ample divisors of special type
(e.g., \cite{Ba}, \cite{BS}, \cite{F1}, \cite{Ha1}, \cite{I2},
\cite{I1}, \cite{I}, \cite{S1}), in $\S$\ref{Fano}  we obtain
similar results as in \cite[Theorem 1]{Ba} and \cite[Prop.VI]{S1}
(see Propositions \ref{prop-2}, \ref{prop-3}), and in
$\S$\ref{small degree} we classify smooth polarized pairs $(X,L)$
which admit an ample divisor $A\in |L|$ such that $A=A_1+...+A_r,
r\geq 1,$ and all the components $A_i$ have small degree with
respect to suitable line bundles $H_i$ on $X$ for every
$i=1,...,r$.

\begin{pro}\label{prop-1}
Let $L$ be an ample line bundle on a smooth complex projective
variety $X$ of dimension $n$ with $n\geq 5$. Assume that there is a
divisor $A=A_1+...+A_r\in |L|, r\geq 1$, where each $A_i$ is an
irreducible and reduced normal Gorenstein projective variety with
$\dim\mathrm{Irr}(A_i)\leq 0$, where $\mathrm{Irr}(A_i)$ is the set
of irrational singularities of $A_i$. Suppose that for any $k=1,...,r$ there exist ample and spanned
line bundles $H_k$ on $X$ such that $[H_k]_{A_k}$ is very and
$[H_k]_{A_k}^{n-1}\leq 4$.
Then one of the following possibilities holds:
\begin{enumerate}
\item $r\geq 1$, $H_1=...=H_r=H$ and $(X,H)$ is one of the
following pairs:
\begin{enumerate}
\item $(\mathbb{P}^n,\mathcal{O}_{\mathbb{P}^n}(1))$ and $A_i\in
|\mathcal{O}_{\mathbb{P}^n}(a_i)|$ with $1\leq a_i\leq 4$ for every
$i=1,...,r$; \item $(\mathbb{Q}^n,\mathcal{O}_{\mathbb{Q}^n}(1))$
and $A_i\in |\mathcal{O}_{\mathbb{Q}^n}(a_i)|$ with $a_i=1,2$ for
every $i=1,...,r$; \item $X\subset\mathbb{P}^{n+1}$ is a
hypersurface of degree $3$ or $4$, and $A_i=H\in
|\mathcal{O}_{\mathbb{P}^{n+1}}(1)_X|$ for every $i=1,...,r$; \item
$X=\mathbb{Q}_1\cap\mathbb{Q}_2\subset\mathbb{P}^{n+2}$ is a
complete intersection of two quadric hypersurfaces
$\mathbb{Q}_i\subset\mathbb{P}^{n+2}$ for $i=1,2$, and $A_i=H\in
|\mathcal{O}_{\mathbb{P}^{n+2}}(1)_X|$ for every $i=1,...,r$; \item
$\pi:X\to\mathbb{P}^n$ is a double cover of $\mathbb{P}^n$ with
branch locus $\Delta\in |\mathcal{O}_{\mathbb{P}^n}(2b)|$,
$b=1,2$, $H\in |\pi^*\mathcal{O}_{\mathbb{P}^n}(1)|$ and
$A_i\in |\pi^*\mathcal{O}_{\mathbb{P}^n}(a_i)|, a_i=1,2$, for every
$i=1,...,r$; \item $\pi:X\to\mathbb{Q}^n$ is a double cover of a
quadric hypersurface $\mathbb{Q}^n\subset\mathbb{P}^{n+1}$ with
branch locus $\Delta\in |\mathcal{O}_{\mathbb{Q}^n}(2b')|$,
$b'=1,2$, and $A_i=H\in |\pi^*\mathcal{O}_{\mathbb{Q}^n}(1)|$
for every $i=1,...,r$; \item $\pi:X\to\mathbb{P}^n$ is a $d$-cover
of $\mathbb{P}^n$ with $d=3,4$, and $A_i=H\in
|\pi^*\mathcal{O}_{\mathbb{P}^n}(1)|$ for any $i=1,...,r$;
\end{enumerate}
\item $r\geq 2$, $X\cong\mathbb{P}^1\times\mathbb{P}^4$ and after
renaming
$(A_1,[H_1]_{A_1})\cong(\mathbb{P}^1\times\mathbb{P}^3,\mathcal{O}_{\mathbb{P}^1\times\mathbb{P}^3}(1,1))$,
with $A_1\in |\mathcal{O}_X(0,1)|$ and $H_1\in
|\mathcal{O}_X(1,1)|$, $(A_2,[H_2]_{A_2})\cong(\mathbb{P}^4,\mathcal{O}_{\mathbb{P}^4}(1))$
with $A_2\in |\mathcal{O}_X(1,0)|$ and $H_2\in |\mathcal{O}_X(t,1)|$ for some integer
$t\geq 1$, and the remaining polarized pairs $(A_k,[H_k]_{A_k})$ are of these two types.
\end{enumerate}
\end{pro}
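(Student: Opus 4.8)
The plan is to combine the structure theorems of $\S$\ref{Proof} with the classification of polarized varieties of degree at most $4$. First I would look at the components individually. Each $A_i$ is a normal Gorenstein projective variety of dimension $n-1\ge 4$ with $\dim\mathrm{Irr}(A_i)\le 0$, carrying the ample and spanned line bundle $\mathcal{H}_i:=[H_i]_{A_i}$ with $\mathcal{H}_i^{\,n-1}\le 4$; by the classification of low-degree polarized varieties (see e.g.\ \cite{F1}, \cite{I}) the pair $(A_i,\mathcal{H}_i)$ lies on a short list: $(\mathbb{P}^{n-1},\mathcal{O}(1))$; $(\mathbb{Q}^{n-1},\mathcal{O}(1))$; a cubic or a quartic hypersurface, or a complete intersection of two quadrics, with $\mathcal{O}(1)$; a $d$-sheeted cover of $\mathbb{P}^{n-1}$ $(d\le 4)$ or a double cover of $\mathbb{Q}^{n-1}$, with the pullback of $\mathcal{O}(1)$; or $(\mathbb{P}^1\times\mathbb{P}^{n-2},\mathcal{O}(1,1))$, the last case forcing $n-1=4$. (Degenerate possibilities — singular quadrics aside, cones, Veronese images, non-normal images — are either subsumed in this list or excluded by the normal/Gorenstein/$\mathrm{Irr}\le 0$ hypotheses.) The observation that drives everything is that $\mathrm{Pic}(A_i)\cong\mathbb{Z}$ in each of these cases except the last.

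I would then distinguish two cases. Suppose first $\mathrm{Pic}(A_i)\cong\mathbb{Z}$ for every $i$. Then $(\diamondsuit)$ holds ($X$ smooth makes the $A_i$ Cartier), and since $X$ is Cohen-Macaulay, locally a complete intersection, and has $\mathrm{Irr}(X)=\emptyset$, Theorem \ref{thm-2} gives that every $A_i$ is an \emph{ample} Cartier divisor on $X$; the hypotheses of Corollary \ref{cor-1} are then met, so $\mathrm{Pic}(X)=\mathbb{Z}\langle\Lambda\rangle$ with $\Lambda$ ample. Writing $H_i=b_i\Lambda$ and $A_i=c_i\Lambda$ with $b_i,c_i\ge 1$, the identity $b_i^{\,n-1}c_i\,\Lambda^{n}=\mathcal{H}_i^{\,n-1}\le 4$ together with $n-1\ge 4$ forces $b_i=1$; hence $H_1=\dots=H_r=\Lambda=:H$ and $\Lambda^{n}\le 4$. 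Now $(X,H)$ is a smooth $n$-fold with $\mathrm{Pic}(X)=\mathbb{Z}\langle H\rangle$, $H$ ample and spanned, $H^{n}\le 4$; examining the finite morphism $\varphi_{|H|}$ — which is either birational onto a variety of degree $\le 4$, a $d$-sheeted cover of $\mathbb{P}^{n}$ $(d\le 4)$, or a double cover of $\mathbb{Q}^{n}$ — and imposing that each $A_i\in|a_iH|$ be a normal Gorenstein divisor of the admissible degree ($a_iH^{n}=\mathcal{H}_i^{\,n-1}\le 4$) yields exactly the pairs $(X,H)$ and the ranges of the $a_i$ listed in part $(1)$.

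Suppose instead some $A_{i_0}$ has $\mathrm{Pic}$ of rank $\ge 2$. By the list above this forces $n=5$ and $A_{i_0}\cong\mathbb{P}^1\times\mathbb{P}^{3}$, and now $(\diamondsuit)$ fails, so Theorems \ref{thm-1}--\ref{thm-2} are not directly available for this component — this is the heart of the argument. I would recover $X$ using the two extremal fibrations of $\mathbb{P}^1\times\mathbb{P}^{3}$ (the $\mathbb{P}^3$-bundle over $\mathbb{P}^1$ and the $\mathbb{P}^1$-bundle over $\mathbb{P}^3$): via adjunction on $X$ along $A_{i_0}$, the cone and contraction theorems, and the positivity coming from the ampleness of $A=\sum_k A_k$ (together with the results recalled in $\S$\ref{consequences}, in particular Propositions \ref{prop-2}--\ref{prop-3}), one shows that the rulings of $A_{i_0}$ deform to sweep out $X$ and endow it with a $\mathbb{P}^4$-bundle structure over $\mathbb{P}^1$ of Picard rank two, whence $X\cong\mathbb{P}^1\times\mathbb{P}^{4}$. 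A direct computation on $\mathbb{P}^1\times\mathbb{P}^{4}$ then finishes: an irreducible divisor $A_k$ for which some ample spanned $H_k$ has $[H_k]_{A_k}^{\,4}\le 4$ must lie in $|\mathcal{O}(0,1)|$ (so $(A_k,[H_k]_{A_k})\cong(\mathbb{P}^1\times\mathbb{P}^{3},\mathcal{O}(1,1))$, $H_k\in|\mathcal{O}(1,1)|$) or in $|\mathcal{O}(1,0)|$ (so $(A_k,[H_k]_{A_k})\cong(\mathbb{P}^{4},\mathcal{O}(1))$, $H_k\in|\mathcal{O}(t,1)|$, $t\ge 1$); since neither $\mathcal{O}(0,1)$ nor $\mathcal{O}(1,0)$ is ample on $\mathbb{P}^1\times\mathbb{P}^{4}$, the divisor $A=\sum_k A_k$ is ample only if both types occur, forcing $r\ge 2$. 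This is part $(2)$.

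The hard part is the last case: since $A_{i_0}$ need not be ample (in the conclusion it is only nef) one cannot invoke Theorem \ref{thm-1} for it, and identifying $X$ requires an ad hoc analysis of the normal bundle of $A_{i_0}$ in $X$ and of the deformations of the two families of rulings of $\mathbb{P}^1\times\mathbb{P}^{3}$. A secondary technical point is to make the degree-$\le 4$ classification of the components fully rigorous in the (possibly singular) normal Gorenstein range, so that the short list in the first step is genuinely exhaustive.
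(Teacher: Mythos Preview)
Your treatment of the case where every $\mathrm{Pic}(A_i)\cong\mathbb{Z}$ is correct and essentially coincides with the paper's: the degree-$\le 4$ classification yields the dichotomy, then $(\diamondsuit)$ together with Theorem \ref{thm-2}/Corollary \ref{cor-1} (equivalently the Lefschetz statements of \S\ref{consequences}) gives $\mathrm{Pic}(X)=\mathbb{Z}\langle\Lambda\rangle$, and the inequality $b_i^{\,n-1}c_i\Lambda^n\le 4$ forces $b_i=1$, after which the finite morphism $\varphi_{|\Lambda|}$ produces the list in part (1). Your endgame on $\mathbb{P}^1\times\mathbb{P}^4$ (determining the admissible $A_k,H_k$ and forcing $r\ge 2$) also matches.

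The gap is in the passage from $A_{i_0}\cong\mathbb{P}^1\times\mathbb{P}^3$ to $X\cong\mathbb{P}^1\times\mathbb{P}^4$. First, your appeal to Propositions \ref{prop-2}--\ref{prop-3} is misplaced: Proposition \ref{prop-2} concerns $A_1\cong\mathbb{P}^{n-1}$, and Proposition \ref{prop-3} presupposes $(\diamondsuit)$, which is exactly what fails here. Second, the strategy of ``deforming the rulings'' does not get off the ground, because without further input the normal bundle $[A_{i_0}]_{A_{i_0}}$ is unknown, so you cannot compute $K_X\cdot\ell$ for a line $\ell$ in a $\mathbb{P}^3$-fiber and you have no extremal ray to contract. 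The paper's mechanism is different and sharper: by adjunction and an intersection computation on $A_1$ one shows that $K_X+cH_i$ is \emph{not nef} for some $i$ (with $c=5$ when $r=1$, ruling case (b) out entirely; with $c=4$ when $r\ge 2$). For $r\ge 2$ this is a separate Claim whose proof analyses how the \emph{other} components $A_k$ restrict to $A_1$ along the nefvalue morphism $\Phi_1:A_1\to\mathbb{P}^1$, and crucially invokes \cite[(4.2)]{CHS} to exclude the possibility that some $h_k=[A_k]_{A_1}$ dominates $\mathbb{P}^1$. Once $K_X+4H_i$ is not nef, the standard adjunction-theoretic classification (\cite{I1}, \cite{F1}, \cite[\S 7.2]{BS}) forces $(X,H_i)$ to be a $\mathbb{P}^4$-bundle over a smooth curve; a direct calculation with the tautological class on $\mathbb{P}(\mathcal{E})$ then pins down $\mathcal{E}\cong\mathcal{O}_{\mathbb{P}^1}^{\oplus 5}$, i.e.\ $X\cong\mathbb{P}^1\times\mathbb{P}^4$, together with $A_1\in|\mathcal{O}_X(0,1)|$ and $H_1\in|\mathcal{O}_X(1,1)|$. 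The essential missing idea in your sketch is this use of the remaining components of $A$ (through \cite{CHS}) to force non-nefness of $K_X+4H_i$; that is what replaces the ampleness of $A_{i_0}$ you correctly note you do not have.
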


\section{Notation}

Let $\mathcal{V}$ be an integral normal complex projective variety
of dimension $n\geq 3$ endowed with an ample line bundle
$\mathcal{L}$. All notation and terminology used here are standard
in algebraic geometry. We adopt the additive notation for line
bundles and the numerical equivalence is denoted by $\equiv$, while
the linear equivalence by $\simeq$~. The pull-back $\iota ^{\ast
}\mathcal{F}$ of a line bundle $\mathcal{F}$ on $\mathcal{V}$ by an
embedding $\iota :Y\hookrightarrow\mathcal{V}$ is sometimes denoted
by $\mathcal{F}_{Y}$. We denote by $K_{\mathcal{V}}$ the canonical
bundle of $\mathcal{V}$. For such a polarized variety
$(\mathcal{V},\mathcal{L})$, we will use the adjunction theoretic
terminology of \cite{BS} and we say that $\mathcal{V}$ is an
\textit{$n$-fold} (denoted by $X$) when $\mathcal{V}$ is smooth.

\section{Proof of Theorems \ref{thm-1} and \ref{thm-2}}\label{Proof}

\noindent First of all, let us show that $(\diamondsuit)$ implies
that all the components of $A$ are actually nef and big Cartier
divisors on $\mathcal{V}$, obtaining the following

\medskip

\noindent\textit{Proof of Theorem} \ref{thm-1}. Suppose that $A_1\cap A_2:=W$ is nonempty.
Define integers $a_{ij}$ putting
$\mathcal{O}_{\mathcal{V}}(A_i)_{A_j}:=\mathcal{O}_{A_j}(a_{ij}\mathcal{H}_j)$, where $\mathcal{H}_j$ is
the ample generator of Pic$(A_j)$ (mod torsion). Then
$$\mathcal{O}_W(a_{11}\mathcal{H}_1)=\mathcal{O}_W(A_1)=\mathcal{O}_W(a_{12}\mathcal{H}_2)$$
is an ample line bundle on $W$. Hence $a_{11}$ is positive. Since
$A$ is a connected divisor on $\mathcal{V}$ (see \cite[III
7.9]{HartBook}), for every $i=1,...,r$ there exists a component
$A_j$ of $A\in |\mathcal{L}|$ with $j\neq i$ such that $A_i\cap
A_j\neq \emptyset$. This shows that
$\mathcal{O}_{\mathcal{V}}(A_i)_{A_i}$ is ample for any $i=1,...,r$,
i.e. $A_i$ is nef and big. From \cite[III 4.2]{Ha1} (see also
\cite[(2.6.5)]{BS} or \cite[(1.2.30)]{LazBook}), it follows that
there exists a proper birational morphism $f_i:\mathcal{V}\to
\mathcal{V}_i$ from $\mathcal{V}$ to a projective normal variety
$\mathcal{V}_i$ given by the map associated to
$|\mathcal{O}_{\mathcal{V}}(m_iA_i)|$ for some $m_i>>0$, which is an
isomorphism in a neighborhood of $A_i$ and such that $f_i(A_i)$ is
an effective ample divisor on $\mathcal{V}_i$.

\medskip

\noindent\textit{Claim.} The morphism $f_i$ contracts at most a
finite number of curves on $\mathcal{V}$.

\smallskip

\noindent By \cite[(2.5.5)]{BS} note that $A_i\simeq H_i+D_i$ for
any $i=1,...,r$, where $H_i$ is $\mathbb{Q}$-ample and $D_i$ is
$\mathbb{Q}$-effective. This shows that $A_i\cdot A_j$ is nonzero
and so $A_i\cap A_j$ can not be empty. Let $W_{ij}:=A_i\cap A_j$ and
note that $W_{ij}$ is an ample divisor both in $A_i$ and in $A_j$.
Let $Z$ be an irreducible variety of dimension greater than or equal
to two. Since $\mathcal{L}$ is ample, we have that $Z\cap A_k\neq
\emptyset$ for some $k=1,...,r$, and then $W_{ik}\cap
Z\neq\emptyset$ since $\dim Z\cap A_k\geq 1$. Put $R:=Z\cap A_i$.
Thus $R$ is nonempty, $\dim R\geq 1$ and $\mathcal{O}(R)_R$ is
ample. By \cite[III 4.2]{Ha1} (or \cite[(1.2.30)]{LazBook}), the
linear system $|\mathcal{O}_{\mathcal{V}}(m_iA_i)|$ restricted to
$Z$ can not contract $Z$ to a lower dimensional variety.
\hfill{$\square$}

\bigskip

\noindent\textit{Proof of Theorem} \ref{thm-2}. Put
$\mathcal{O}_{\mathcal{V}}(A_i)_{A_j}=\mathcal{O}_{A_j}(a_{ij}\mathcal{H}_j)$
and $l_t:={\mathcal{O}_{A_t}(\mathcal{H}_t)}^{n-1}>0$ for any
$i,j,t\in\{1,...,r\}$. From Theorem \ref{thm-1} we know
that $a_{ii}>0$ for every $i=1,...,r$. Moreover, since $A$ is a
connected divisor on $X$ (see \cite[III 7.9]{HartBook}), for every
$j=1,...,r$ there exists a component $A_{k}$ of $A\in |L|$ with
$k\neq j$ such that $A_{j}\cap A_{k}\neq \varnothing .$ So we get
the following expressions
\begin{equation*}
A_{k}^{s}A_{j}^{n-s}=\mathcal{O}_{A_{k}}(a_{kk}\mathcal{H}_k)^{s-1}\mathcal{O}%
_{A_{k}}(a_{jk}\mathcal{H}_k)^{n-s}=a_{kk}^{s-1}a_{jk}^{n-s}l_{k}
\end{equation*}
\begin{equation}
A_{k}^{s}A_{j}^{n-s}=\mathcal{O}_{A_{j}}(a_{kj}\mathcal{H}_j)^{s}\mathcal{O}%
_{A_{j}}(a_{jj}\mathcal{H}_j)^{n-s-1}=a_{kj}^{s}a_{jj}^{n-s-1}l_{j},
\tag{1}
\end{equation}

\noindent with $1\leq s\leq n-1$. Note that $a_{jj}\neq 0$,
$a_{kk}\neq 0$, $a_{kj}>0$ and $a_{jk}>0$. Moreover, from the
equations (1) with $s=1,2$ we deduce that
\begin{equation*}
a_{jk}(a_{kj}^{2}a_{jj}^{n-3}l_{j})=a_{jk}(a_{kk}a_{jk}^{n-2}l_{k})=a_{kk}(a_{jk}^{n-1}l_{k})=a_{kk}(a_{kj}a_{jj}^{n-2}l_{j}),
\end{equation*}
that is,
\begin{equation}
a_{jk}a_{kj}=a_{jj}a_{kk}.  \tag{2}
\end{equation}

\noindent So we get

\begin{eqnarray*}
A_{s}^{2}\mathcal{L}^{n-2} &=&\sum_{h_{1}+...+h_{r}=n-2}\quad \frac{(n-2)!}{%
h_{1}!...h_{r}!}\quad A_{1}^{h_{1}}\text{ }...\text{ }A_{s}^{h_{s}+2}\text{ }%
...\text{ }A_{r}^{h_{r}} \\
&=&\sum_{h_{1}+...+h_{r}=n-2}\frac{(n-2)!}{h_{1}!...h_{r}!}\quad
a_{1s}^{h_{1}}\text{ }...\text{ }a_{ss}^{h_{s}+1}\text{ }...\text{ }%
a_{rs}^{h_{r}}l_{s} \\
&=&a_{ss}^{n-1}l_{s}+\text{[non-negative terms]}>0,
\end{eqnarray*}
for every $s=1,...,r$. Furthermore, for $j\neq k$ we have also the
following equations
\begin{equation}
A_{j}^{2}\mathcal{L}^{n-2}=[A_{j}]_{A_{j}}\mathcal{L}_{A_{j}}^{n-2}=\sum_{k_{1}+...+k_{r}=n-2}
\frac{(n-2)!}{k_{1}!...k_{r}!}
a_{1j}^{k_{1}}\text{ }...\text{ }a_{jj}^{k_{j}+1}\text{ }...\text{ }%
a_{rj}^{k_{r}}l_{j}  \tag{3}
\end{equation}
\begin{equation}
A_{k}^{2}\mathcal{L}^{n-2}=[A_{k}]_{A_{k}}\mathcal{L}_{A_{k}}^{n-2}=\sum_{h_{1}+...+h_{r}=n-2}
\frac{(n-2)!}{h_{1}!...h_{r}!}
a_{1k}^{h_{1}}\text{ }...\text{ }a_{kk}^{h_{k}+1}\text{ }...\text{ }%
a_{rk}^{h_{r}}l_{k}  \tag{4}
\end{equation}
\begin{equation}
A_{j}A_{k}\mathcal{L}^{n-2}=[A_{k}]_{A_{j}}\mathcal{L}_{A_{j}}^{n-2}=\sum_{k_{1}+...+k_{r}=n-2}
\frac{(n-2)!}{k_{1}!...k_{r}!}
a_{1j}^{k_{1}}\text{ }...\text{ }a_{jj}^{k_{j}}\text{ }...\text{ }%
a_{kj}^{k_{k}+1}\text{ }...\text{ }a_{rj}^{k_{r}}l_{j}  \tag{5}
\end{equation}
\begin{equation}
A_{j}A_{k}\mathcal{L}^{n-2}=[A_{j}]_{A_{k}}\mathcal{L}_{A_{k}}^{n-2}=\sum_{h_{1}+...+h_{r}=n-2}
\frac{(n-2)!}{h_{1}!...h_{r}!}
a_{1k}^{h_{1}}\text{ }...\text{ }a_{jk}^{h_{j}+1}\text{ }...\text{ }%
a_{kk}^{h_{k}}\text{ }...\text{ }a_{rk}^{h_{r}}l_{k}.  \tag{6}
\end{equation}

\bigskip

\noindent Since by (2) we get
\begin{equation*}
(a_{1j}^{k_1}\cdot ... \cdot a_{jj}^{k_j+1}\cdot ... \cdot
a_{rj}^{k_r})(a_{1k}^{h_1}\cdot ... \cdot a_{kk}^{h_k+1}\cdot ...
\cdot a_{rk}^{h_r})=
\end{equation*}
\begin{equation*}
=a_{jj}\ a_{kk}\ (a_{1j}^{k_1}\cdot ... \cdot a_{jj}^{k_j}\cdot
... \cdot a_{rj}^{k_r})(a_{1k}^{h_1} \cdot ... \cdot
a_{kk}^{h_k}\cdot ... \cdot a_{rk}^{h_r})=
\end{equation*}
\begin{equation*}
=a_{jk}\ a_{kj}\ (a_{1j}^{k_1}\cdot ... \cdot a_{jj}^{k_j}\cdot
... \cdot a_{rj}^{k_r})(a_{1k}^{h_1}\cdot ... \cdot
a_{kk}^{h_k}\cdot ... \cdot a_{rk}^{h_r})=
\end{equation*}
\begin{equation*}
=(a_{1j}^{k_1}\cdot ... \cdot a_{jj}^{k_j}\cdot ... \cdot
a_{kj}^{k_k+1}\cdot ... \cdot a_{rj}^{k_r})(a_{1k}^{h_1}\cdot ...
\cdot a_{jk}^{h_j+1}\cdot ... \cdot a_{kk}^{h_k}\cdot ... \cdot
a_{rk}^{h_r}),
\end{equation*}

\noindent from (3), (4), (5) and (6), we obtain that

\begin{equation*}
(A_{k}A_{j}\mathcal{L}^{n-2})^{2}=(A_{k}^{2}\mathcal{L}^{n-2})(A_{j}^{2}\mathcal{L}^{n-2})>0
\end{equation*}
for every $k$ and $j$ such that $A_{j}\cap A_{k}\neq \varnothing
$.

Thus, by \cite[(2.5.4)]{BS} we see that there exists a rational
number $\lambda _{jk}$ such that $A_{j}$ is numerically equivalent
to $\lambda _{jk}A_{k}$. Since from $(2)$ it follows that $A_k$
meets all the other components of $A$, by an inductive argument we
get
\begin{equation}
\mathcal{L}\simeq A_1+ ... +A_r \equiv (\lambda_{1k}+ ...
+\widehat{\lambda_{kk}} + ... +\lambda_{rk}+1)A_k=\mu_kA_k, \tag{7}
\end{equation}

\noindent where $\mu_k:=\lambda_{1k}+ ...
+\widehat{\lambda_{kk}} + ... +\lambda_{rk}+1$ and the symbol $\widehat{\ }$ denotes suppression.

Moreover, since
$$0<\mathcal{L}_{A_{j}}^{n-1}=A_{j}\mathcal{L}^{n-1}=\lambda
_{jk}A_{k}\mathcal{L}^{n-1}=\lambda
_{jk}\mathcal{L}_{A_{k}}^{n-1},$$ we have that $\lambda _{jk}>0$ for
every $j\neq k$ and from (7) it follows that $\mu_k\geq 1$, i.e.
$A_k$ is an ample Cartier divisor on $\mathcal{V}$ for any
$k=1,...,r$. By combining this with Theorem \ref{thm-1}, we obtain
that every $f_i$ is an isomorphism. \hfill{$\square$}

\begin{cor}\label{cor-1}
Let $\mathcal{V}$ be an integral normal complex projective variety
of dimension $n\geq 3$ with at worst Cohen-Macaulay singularities.
Let $\mathcal{L}$ be an ample line bundle on $\mathcal{V}$ and
assume that $(\diamondsuit)$ holds. Furthermore, suppose that one
of the following conditions holds:
\begin{enumerate}
\item[$(a)$] $\mathcal{V}$ is a locally complete intersection;
\item[$(b)$] $n=3$,
$\mathcal{V}-A_k$ and $\mathcal{V}-F$ are locally complete
intersections for some $k=1,...,r$ and some finite set
$F\subset\mathcal{V}-\mathrm{Irr}(\mathcal{V})$.
\end{enumerate}
Then $\mathrm{Pic}(\mathcal{V})=\mathbb{Z}\langle\Lambda\rangle$,
where $\Lambda$ is an ample line bundle on $\mathcal{V}$. In
particular, all the $A_i$'s are ample Cartier divisors on
$\mathcal{V}$ and the maps $f_i:\mathcal{V}\to\mathcal{V}_i$ of
Theorem \ref{thm-1} are all isomorphisms.
\end{cor}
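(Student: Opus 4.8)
The plan is to isolate the assertion $\mathrm{Pic}(\mathcal{V})=\mathbb{Z}\langle\Lambda\rangle$ as the only real content and deduce everything else from it, while proving that rank‑one statement itself by a Lefschetz‑type theorem for the Picard group. First I would dispose of the implications. By Theorem \ref{thm-1} each $A_i$ is a nef and big Cartier divisor on $\mathcal{V}$; once we know $\mathrm{Pic}(\mathcal{V})=\mathbb{Z}\langle\Lambda\rangle$ with $\Lambda$ an ample line bundle, each $A_i$ must equal $d_i\Lambda$ for some $d_i\in\mathbb{Z}$, and nefness together with bigness force $d_i\geq 1$, so every $A_i$ is an ample Cartier divisor. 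Then, since $A_i$ is ample, the morphism $f_i$ of Theorem \ref{thm-1} associated to $|\mathcal{O}_{\mathcal{V}}(m_iA_i)|$ contracts no curve of $\mathcal{V}$ (by the Claim in the proof of Theorem \ref{thm-1} it contracts at most finitely many, and an ample divisor meets every curve positively); hence $f_i$ is a finite birational morphism onto the normal variety $\mathcal{V}_i$, and so it is an isomorphism by Zariski's main theorem. Thus the task reduces to establishing $\mathrm{Pic}(\mathcal{V})=\mathbb{Z}\langle\Lambda\rangle$.

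For this I would show that, for a suitable index $k$, the restriction homomorphism $\mathrm{Pic}(\mathcal{V})\to\mathrm{Pic}(A_k)$ is injective. Since $\mathrm{Pic}(A_k)$ has rank one this forces $\mathrm{rk}\,\mathrm{Pic}(\mathcal{V})\leq 1$, hence exactly one because $\mathcal{L}$ is ample, and injectivity also carries torsion‑freeness from $\mathrm{Pic}(A_k)$ to $\mathrm{Pic}(\mathcal{V})$; taking $\Lambda$ to be the ample generator then gives the claim. The injectivity is a Grothendieck–Lefschetz statement, and this is where the two cases enter. In case $(a)$, $\mathcal{V}$ is a locally complete intersection of dimension $n\geq 3$ and $A=A_1+\cdots+A_r\in|\mathcal{L}|$ is a reduced ample effective Cartier divisor, so the classical Grothendieck–Lefschetz theorem for the Picard group (SGA2; see also \cite{Ba}, \cite{S1}, \cite{LazBook}) applies: $\mathrm{Pic}(\mathcal{V})\to\mathrm{Pic}(A)$ is injective, and an isomorphism when $n\geq 4$. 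In case $(b)$, $n=3$ and $\mathcal{V}$ is only assumed Cohen–Macaulay, a locally complete intersection away from $A_k$ and away from a finite set $F$ of rational singularities; here I would invoke the threefold version of the same principle, which holds precisely under such hypotheses (this is the kind of statement proved by Bădescu \cite{Ba} and Sommese \cite{S1}), applied either to $(\mathcal{V},A_k)$, using that $f_k$ is an isomorphism near $A_k$ so $A_k$ behaves like an ample divisor there, or to the pair $(\mathcal{V}_k,f_k(A_k))$, since $f_k(A_k)$ is an ample Cartier divisor on the normal threefold $\mathcal{V}_k$ whose extra singularities sit over the finitely many contracted curves.

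The point that requires genuine care, and that I expect to be the main obstacle, is that $A$ is reducible, so $\mathrm{Pic}(A)$ is not literally a group of rank one; I would bridge this by the numerical identities already obtained in the proof of Theorem \ref{thm-2}. Writing $\mathcal{O}_{\mathcal{V}}(A_i)_{A_j}=\mathcal{O}_{A_j}(a_{ij}\mathcal{H}_j)$, we have $a_{ii}>0$, $a_{ki}>0$, $a_{jk}a_{kj}=a_{jj}a_{kk}$, $A_j\equiv\lambda_{jk}A_k$ with $\lambda_{jk}>0$, and $W_{ik}=A_i\cap A_k$ is a nonempty divisor that is ample in both $A_i$ and $A_k$. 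Given $\mathcal{B}\in\mathrm{Pic}(\mathcal{V})$, write $\mathcal{B}_{A_i}\equiv b_i\mathcal{H}_i$. On $W_{ik}$ one has $\mathcal{O}_{\mathcal{V}}(A_k)_{W_{ik}}\equiv a_{ki}(\mathcal{H}_i)_{W_{ik}}\equiv a_{kk}(\mathcal{H}_k)_{W_{ik}}$ and $\mathcal{B}_{W_{ik}}\equiv b_i(\mathcal{H}_i)_{W_{ik}}\equiv b_k(\mathcal{H}_k)_{W_{ik}}$, with $(\mathcal{H}_k)_{W_{ik}}$ ample; eliminating $(\mathcal{H}_i)_{W_{ik}}$ gives $a_{kk}b_i=a_{ki}b_k$, so $b_i$ is determined, modulo torsion, by $b_k$. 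Composing the Lefschetz injection with $\mathcal{B}\mapsto b_k$ therefore identifies $\mathrm{Pic}(\mathcal{V})$, modulo torsion, with a subgroup of $\mathbb{Z}$, and together with the ampleness of $\mathcal{L}$ this yields $\mathrm{Pic}(\mathcal{V})=\mathbb{Z}\langle\Lambda\rangle$. The delicate ingredients are thus the precise hypotheses under which Picard–Lefschetz injectivity is available in the singular threefold case $(b)$, and checking that passing from the components $A_i$ back to $A$ loses no information beyond torsion.
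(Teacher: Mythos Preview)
Your plan reverses the natural order of the argument and, in doing so, creates difficulties that the paper avoids entirely. The paper's proof is a two-liner: both hypotheses $(a)$ and $(b)$ guarantee the assumptions of Theorem~\ref{thm-2} (in case $(a)$ take $F=\varnothing$; in case $(b)$ the finite set $F$ is given), so Theorem~\ref{thm-2} already yields that every $A_i$ is an ample Cartier divisor and every $f_i$ is an isomorphism. With ampleness of $A_k$ in hand and $\mathcal{V}-A_k$ a local complete intersection (automatic in case $(a)$, assumed in case $(b)$), the Lefschetz theorem for Picard groups \cite[(2.3.4)]{BS} applied to the \emph{irreducible} ample divisor $A_k$ gives $\mathrm{Pic}(\mathcal{V})\hookrightarrow\mathrm{Pic}(A_k)$ with torsion-free cokernel for $n\geq 3$, hence $\mathrm{Pic}(\mathcal{V})=\mathbb{Z}\langle\Lambda\rangle$.

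You instead try to prove $\mathrm{Pic}(\mathcal{V})=\mathbb{Z}$ first, before knowing the $A_i$ are ample, and this forces you either to apply Lefschetz to the reducible divisor $A$ (then disentangle $\mathrm{Pic}(A)$ via the relations on $W_{ik}$, which is exactly the computation already packaged inside the proof of Theorem~\ref{thm-2}) or, in case $(b)$, to apply Lefschetz to the merely nef and big $A_k$. The latter is a genuine gap: the nef-and-big Lefschetz statement you have available (Corollary~\ref{corollary nef and big}(4), i.e.\ \cite[(2.3.3)]{BS}) only gives Picard injectivity for $n\geq 4$, not for $n=3$; and your alternative of passing to $(\mathcal{V}_k,f_k(A_k))$ is problematic because the contraction $f_k$ may create singularities on $\mathcal{V}_k$ outside the LCI locus, and you would still need to transport the conclusion back to $\mathrm{Pic}(\mathcal{V})$. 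The missing idea is simply to invoke Theorem~\ref{thm-2} directly: its hypotheses are exactly what $(a)$ and $(b)$ provide, and once each $A_k$ is ample the standard Lefschetz theorem applies to an irreducible divisor with no further work.
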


\begin{proof}
In both cases $(a)$ and $(b)$, we simply use
Theorem \ref{thm-2} and \cite[(2.3.4)]{BS}.
\end{proof}

\begin{rem}\label{remark-example}
Let
$\mathcal{V}\cong\mathbb{P}(\mathcal{O}_{\mathbb{P}^{n-1}}\oplus\mathcal{O}_{\mathbb{P}^{n-1}}(-d))$
with $0<d<n$ and denote by $\pi :\mathcal{V}\to\mathbb{P}^{n-1}$ the
projection map. Note that there exists a smooth divisor $E$ on $\mathcal{V}$
which is a section of $\pi$ such that $E\cong\mathbb{P}^{n-1}$ and
$E_{E}\in |\mathcal{O}_{\mathbb{P}^{n-1}}(-d)|$. Put
$\mathcal{L}:=E+\pi^*\mathcal{O}_{\mathbb{P}^{n-1}}(a)$. Then, for a suitable
integer $a>>0$, we have that $\mathcal{L}$ is ample on $\mathcal{V}$ and that there
exists a divisor $A\in |\mathcal{L}|$ such that $A=A_1+A_2$, where $A_1=E$ and
$A_2\in |\pi^*\mathcal{O}_{\mathbb{P}^{n-1}}(a)|$ is a smooth
divisor on $\mathcal{V}$. This example shows that if $r\geq 2$ then the above
results can not be improved by assuming in $(\diamondsuit)$ that
$\mathrm{Pic}(A_i)\neq\mathbb{Z}$ for some $i=1,...,r$, also when
$r=2$ and $\mathcal{V}$ is a Fano $n$-fold with $n\geq 3$.
\end{rem}

\section{Some immediate consequences}\label{consequences}

Let us collect here some results due to the different positivity of
the components $A_i$ of $A\in |\mathcal{L}|$ under the hypothesis $(\diamondsuit)$.

\subsubsection{Nefness and bigness of the $A_i$'s}\label{subsection nef and big}

From Theorem \ref{thm-1}, we obtain the following

\begin{cor}\label{corollary nef and big}
Let $\mathcal{V}$ be an integral normal complex projective variety
of dimension $n\geq 3$ and let $\mathcal{L}$ be an ample line
bundle on $\mathcal{V}$. Assume that $(\diamondsuit)$ holds. Set
$D=\sum A_{i_h}$, where all the $i_h\in\{1,...,r\}$ are not
necessarily distinct indexes. Moreover, let
{\em{Irr}}$(\mathcal{V})$ be the set of irrational singularities
of $\mathcal{V}$. Then we have the following properties:
\begin{itemize}
    \item {\em{(Vanishing type Theorems).}} {\begin{enumerate} \item[(1)] Let $\varphi:\mathcal{V}\to Y$ be a morphism
    from $\mathcal{V}$ to a projective variety $Y$. Then $$\varphi_{(i)}(K_\mathcal{V}+D)=0 \quad\mathrm{ for }\ i\geq\max_{y\in\varphi({\mathrm{Irr}}(\mathcal{V}))}
    \dim (\varphi^{-1}(y)\cap\mathrm{Irr}(\mathcal{V}))+1,$$ where $\varphi_{(i)}(\mathcal{T})$ is the $i$-th higher derived functor of the direct
    image $\varphi_{*}(\mathcal{T})$ of a sheaf $\mathcal{T}$ on $\mathcal{V}$;
    \item[(2)] $H^1(\mathcal{V},-D)=0$; \item[(3)] assuming that
    \textrm{Irr}$(\mathcal{V})$ is finite and nonempty, we have that $$\dim H^0(\mathcal{V},
    K_\mathcal{V}+D)\geq \sharp (\mathrm{Irr}(\mathcal{V}))>0,$$ where $\sharp
    (\mathrm{Irr}(\mathcal{V}))$ is the number of points in {\em{Irr}}$(\mathcal{V})$;
    in particular, if $\mathcal{A}\in |D|$ lies in the set of Cohen-Macaulay
    points of $\mathcal{V}$, then $$\dim H^0(\mathcal{V},K_\mathcal{V})+\dim H^0(\mathcal{A},K_\mathcal{A})\geq\sharp
    (\mathrm{Irr}(\mathcal{V}))>0.$$
    \end{enumerate}}
    \item {\em{(Lefschetz type Theorems).}} {\begin{enumerate}
    \item[(4)] Let $\mathcal{A}\in |D|$ be a divisor such that $\mathcal{V}-\mathcal{A}$ is a local complete intersection. Then under the restriction map
    it follows that $$H^j(\mathcal{V},\mathbb{Z})\cong H^j(\mathcal{A},\mathbb{Z}) \qquad \textrm{ for }\ j\leq n-3,$$ and
    $$H^j(\mathcal{V},\mathbb{Z})\to H^j(\mathcal{A},\mathbb{Z}) \qquad \textrm{ for }\ j = n-2,$$ is injective with torsion free
    cokernel; moreover,
    we have that $\mathrm{Pic}(\mathcal{V})\cong\mathrm{Pic}(\mathcal{A})$ for $n\geq
    5,$ and the restriction mapping $\mathrm{Pic}(\mathcal{V})\to \mathrm{Pic}(\mathcal{A})$ is injective
    with torsion free cokernel for $n=4$; in particular, if $\mathcal{V}-A_i$
    is a local complete intersection for some $i=1,...,r$, then
    $\mathrm{Pic}(\mathcal{V})=\mathbb{Z}\langle\Lambda\rangle$ for $n\geq 4$ and some ample line bundle $\Lambda$ on $\mathcal{V}$;
    \end{enumerate}}
    \item {\em{(The Albanese mapping).}} {\begin{enumerate}
    \item[(5)] Assume that $\mathcal{A}\in |D|$ is normal. Moreover, suppose that $\mathcal{A}$ and $\mathcal{V}$ have at worst rational singularities. Then
    the map $\mathrm{Alb}(\mathcal{A})\to\mathrm{Alb}(\mathcal{V})$ induced by
    inclusion is an isomorphism. \end{enumerate}}
    \item {\em{(Hodge Index type Theorems).}} {\begin{enumerate}
    \item[(6)] For $n_{i_1}+ ... +n_{i_k}=n-1$ and
    $n_{i_1}\geq 1$, we have $$(A_{i_0}\cdot A_{i_1}^{n_{i_1}}\cdots A_{i_k}^{n_{i_k}})^2\geq
    (A_{i_0}^2\cdot A_{i_1}^{n_{i_1}-1}\cdots A_{i_k}^{n_{i_k}})(A_{i_1}^{n_{i_1}+1}\cdots
    A_{i_k}^{n_{i_k}}),$$ and for $n_{i_1}+ ... +n_{i_k}=n$, we get also $$(A_{i_1}^{n_{i_1}}\cdots A_{i_k}^{n_{i_k}})^n\geq
    (A_{i_1}^n)^{n_{i_1}}\cdots (A_{i_k}^n)^{n_{i_k}}>0,$$ where $i_h\in \{1,...,r\}$;
    \item[(7)] we have the following inequality: \ $(A_i^{n-1}\cdot A_j)(A_i\cdot A_j^{n-1})\geq A_i^n
    A_j^n>0$; \item[(8)] for $t\geq 1$ and any nef and big line
    bundle $H_i$ on $\mathcal{V}$ with $1\leq i\leq t$, it follows that
    \ $\mathcal{O}_\mathcal{V}(A_j)^{n-t}\cdot \prod_{i=1}^t H_i>0,$ where
    $j\in \{ 1,...,r \}$; \item[(9)] let $H$ be
    a line bundle such that $\dim H^0(\mathcal{V},NH)\geq 2$ for some $N\geq 1$. Then
    $H\cdot\mathcal{O}_\mathcal{V}(A_{i_1})\cdots\mathcal{O}_\mathcal{V}(A_{i_{n-1}})>0,$
    where $i_h\in \{1,...,r\}$.
    \end{enumerate}}
\end{itemize}
\end{cor}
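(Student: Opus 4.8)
The plan is to deduce all nine items from the single input of Theorem~\ref{thm-1}: each $A_i$ is a nef and big Cartier divisor on $\mathcal{V}$, $m_iA_i$ is base point free for $m_i\gg0$, and the morphism $f_i$ contracts only finitely many curves, each disjoint from every $A_j$. First I would record that $D=\sum A_{i_h}$, and hence any $\mathcal{A}\in|D|$, is again nef, big and semiample; that the birational morphism $g:\mathcal{V}\to\mathcal{V}'$ attached to $|mD|$ ($m\gg0$) contracts only finitely many curves, all disjoint from $\mathrm{supp}(\mathcal{A})$, with $g_{*}D$ ample on $\mathcal{V}'$; and I would fix a resolution $\mu:\widetilde{\mathcal{V}}\to\mathcal{V}$, so that $\mu^{*}D$ is nef and big on a smooth projective variety.

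For the vanishing statements (1)--(3) I would argue on $\widetilde{\mathcal{V}}$ and push down. Kawamata--Viehweg vanishing gives $H^{j}(\widetilde{\mathcal{V}},\omega_{\widetilde{\mathcal{V}}}\otimes\mu^{*}\mathcal{O}(D))=0$ and, dually, $H^{j}(\widetilde{\mathcal{V}},\mu^{*}\mathcal{O}(-D))=0$ for $0<j<n$; Grauert--Riemenschneider vanishing gives $R^{q}\mu_{*}(\omega_{\widetilde{\mathcal{V}}}\otimes\mu^{*}\mathcal{O}(D))=0$ for $q>0$. Feeding these into the Leray spectral sequences for $\mu$ and for $\varphi\circ\mu$---using $\mu_{*}\mathcal{O}_{\widetilde{\mathcal{V}}}=\mathcal{O}_{\mathcal{V}}$, the projection formula, and the relative form of Kawamata--Viehweg vanishing (which follows from the absolute one after twisting by $m\varphi^{*}(\text{ample})$)---yields (2) together with $H^{i}(\mathcal{V},\mu_{*}\omega_{\widetilde{\mathcal{V}}}\otimes\mathcal{O}(D))=0$ and $R^{i}\varphi_{*}(\mu_{*}\omega_{\widetilde{\mathcal{V}}}\otimes\mathcal{O}(D))=0$ for $i>0$. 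Writing $\mathcal{Q}$ for the cokernel of the natural inclusion $\mu_{*}\omega_{\widetilde{\mathcal{V}}}\hookrightarrow\mathcal{O}_{\mathcal{V}}(K_{\mathcal{V}})$, which is supported on $\mathrm{Irr}(\mathcal{V})$, the long exact sequence of $0\to\mu_{*}\omega_{\widetilde{\mathcal{V}}}\otimes\mathcal{O}(D)\to\mathcal{O}_{\mathcal{V}}(K_{\mathcal{V}}+D)\to\mathcal{Q}\otimes\mathcal{O}(D)\to0$ identifies $R^{i}\varphi_{*}(K_{\mathcal{V}}+D)$ with $R^{i}\varphi_{*}(\mathcal{Q}\otimes\mathcal{O}(D))$ for $i\geq1$, and the latter vanishes once $i$ exceeds the maximal dimension of a fibre of $\varphi$ over $\varphi(\mathrm{Irr}(\mathcal{V}))$ inside $\mathrm{Irr}(\mathcal{V})$: this is (1). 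Taking $\varphi$ constant and $\mathrm{Irr}(\mathcal{V})$ finite, the same sequence gives $h^{0}(K_{\mathcal{V}}+D)\geq h^{0}(\mathcal{Q})\geq\sharp\mathrm{Irr}(\mathcal{V})$, which is (3); the addendum follows by inserting this into the adjunction sequence $0\to\mathcal{O}_{\mathcal{V}}(K_{\mathcal{V}})\to\mathcal{O}_{\mathcal{V}}(K_{\mathcal{V}}+\mathcal{A})\to\omega_{\mathcal{A}}\to0$, valid because $\mathcal{A}$ is an effective Cartier divisor inside the Cohen--Macaulay locus of $\mathcal{V}$.

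For (4) I would observe that, via the morphism $g$ above, the open set $\mathcal{V}\setminus\mathrm{supp}(\mathcal{A})$ is proper with at most one-dimensional fibres over an affine variety, hence is cohomologically as small as the complement of an ample divisor; the assertions on $H^{j}(\,\cdot\,,\mathbb{Z})$ and on Picard groups then follow from the Lefschetz-type theorems of \cite{Ha1} (see also \cite{BS}, \cite{CHS}) applied to the local complete intersection $\mathcal{V}\setminus\mathcal{A}$, and the ``in particular'' clause is immediate from $(\diamondsuit)$, since $\mathrm{Pic}(\mathcal{V})$ embeds with torsion free cokernel (isomorphically for $n\geq5$) into $\mathrm{Pic}(A_{i})=\mathbb{Z}$ while $\mathcal{L}$ is nonzero. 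For (5) I would use that the Albanese is a birational invariant of varieties with rational singularities, so that it suffices to compare $H^{1}(\mathcal{O})$ and $H^{0}(\Omega^{1})$: from $0\to\mathcal{O}_{\mathcal{V}}(-D)\to\mathcal{O}_{\mathcal{V}}\to\mathcal{O}_{\mathcal{A}}\to0$ together with $H^{1}(\mathcal{V},-D)=H^{2}(\mathcal{V},-D)=0$ (from (2), the rationality of the singularities, and $n\geq3$) one gets $H^{1}(\mathcal{V},\mathcal{O}_{\mathcal{V}})\cong H^{1}(\mathcal{A},\mathcal{O}_{\mathcal{A}})$, and the companion comparison for holomorphic $1$-forms (\cite{S1}, \cite{BS}) yields the stated isomorphism.

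The remaining items (6)--(9) are numerical. I would apply the Hodge index inequality $(\alpha\cdot\beta\cdot\gamma)^{2}\geq(\alpha^{2}\cdot\gamma)(\beta^{2}\cdot\gamma)$, valid for nef classes $\alpha,\beta$ and a nef class $\gamma$ of complementary degree $n-2$ (\cite[(2.5.1)]{BS}, \cite{LazBook}), with $\gamma=A_{i_{1}}^{n_{i_{1}}-1}A_{i_{2}}^{n_{i_{2}}}\cdots A_{i_{k}}^{n_{i_{k}}}$ for the first inequality of (6), and the Minkowski-type inequality for mixed intersection numbers of nef divisors for the product form; (7) is the case $k=2$, $n_{i_{1}}=1$. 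For the strict positivity in (7)--(9) the key point is the Claim in the proof of Theorem~\ref{thm-1}: the locus where $m_{i}A_{i}$ fails to define an embedding downstairs is a finite union of curves, so no prime divisor of $\mathcal{V}$ lies in the ``non-big locus'' of any $A_{i}$, whence $[A_{i}]_{E}$ is big on every prime divisor $E$; induction on dimension then yields $A_{i_{1}}\cdots A_{i_{n-1}}\cdot E>0$ for every nonzero effective divisor $E$ and, more generally, that any product of $n$ nef and big divisors on $\mathcal{V}$ is strictly positive. Then (8) is immediate, and (9) follows because $h^{0}(\mathcal{V},NH)\geq2$ forces $NH$ to be linearly equivalent to a nonzero effective divisor. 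I expect (4) to be the main obstacle: one must make precise in what sense the complement of the support of a merely semiample nef and big divisor with a small---possibly nonempty---contracted locus is ``cohomologically ample'' enough to invoke the Lefschetz arguments of \cite{Ha1}, which is exactly where the finiteness of the contracted curves and their disjointness from $\mathcal{A}$, furnished by Theorem~\ref{thm-1}, are used; the other parts are routine once Grauert--Riemenschneider, Kawamata--Viehweg and the Khovanskii--Teissier inequalities are invoked.
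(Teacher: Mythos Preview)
Your approach is correct and is in substance the same as the paper's: reduce everything to the single fact, supplied by Theorem~\ref{thm-1}, that $D=\sum A_{i_h}$ is nef and big (indeed $1$-ample, since the associated map contracts only finitely many curves), and then read off (1)--(9) from standard positivity results. The paper's proof, however, is a two-line affair: it records that $D$ is nef and big and then simply cites \cite[(2.2.5), (2.2.7), (2.2.8)]{BS} for (1)--(3) and \cite[(2.3.3), (2.3.4), (2.4.4), (2.5.1)--(2.5.3), (2.5.8), (2.5.9)]{BS} for (4)--(9). What you have written is essentially an unpacking of those citations (Kawamata--Viehweg/Grauert--Riemenschneider for the vanishing block, Sommese's Lefschetz theorem for $k$-ample divisors for (4), and Khovanskii--Teissier/Hodge index for (6)--(9)); this buys self-containment at the cost of length, but there is no genuinely new idea beyond what the cited results already encode.

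Two small over-claims to tidy up. First, Theorem~\ref{thm-1} only asserts that the curves contracted by $f_i$ are disjoint from $A_i$, not from \emph{every} $A_j$; your stronger statement is neither proved nor needed. Second, for an arbitrary $\mathcal{A}\in|D|$ in (4) you do not know that the curves contracted by $g$ avoid $\mathrm{supp}(\mathcal{A})$, so the restriction $g|_{\mathcal{V}\setminus\mathcal{A}}$ need not be proper over an affine; but this is harmless, since the Lefschetz theorem you invoke (\cite[(2.3.3)]{BS}, i.e.\ Sommese's theorem for $1$-ample divisors) only requires that $|mD|$ define a morphism with at most one-dimensional fibres, which you have already checked.
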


\begin{proof}
First of all, from Theorem \ref{thm-1}
we deduce that the effective divisor $D=\sum A_{i_h}$
is nef and big on $\mathcal{V}$. Thus $\mathit{(1),
(2)}$ and $\mathit{(3)}$ of the statement follow from \cite[(2.2.5), (2.2.7), (2.2.8)]{BS}.
Finally, cases $\mathit{(4)}$ to $\mathit{(9)}$ follow from \cite[(2.3.3), (2.3.4), (2.4.4),
(2.5.1), (2.5.3), (2.5.2), (2.5.8) and (2.5.9)]{BS}.
\end{proof}

By applying Theorem \ref{thm-1} also to the zero locus
of special sections of $k$-ample vector bundles on an $n$-fold $X$
for $k\geq 0$ (see \cite[$\S 1$]{S2} and \cite[$\S 2.1$]{BS}), we
obtain the following

\begin{cor}\label{Corollary Pic-Theorem for vector bundles}
{\em (Lefschetz-Sommese type Theorem)} Let $\mathcal{E}$ be a
$k$-ample vector bundle of rank $r\geq 1$ on an $n$-fold $X$ with
$n\geq 4$. Assume that there exists a section $s\in\Gamma
(\mathcal{E})$ whose zero locus $Z=(s)_0$ is an integral normal
complex variety such that $\dim Z\geq 3$. Suppose that there exists a divisor $A=A_1+...+A_s,
s\geq 1$, on $Z$ which satisfies $(\diamondsuit)$. If $Z-A_i$ is a
local complete intersection for some $i=1,...,s$, then
$$H^i(X,\mathbb{Z})\cong
H^i(A_i,\mathbb{Z}) \qquad \textrm{ for }\ i\leq\min\{\dim
Z-3,n-r-k-1\},$$ and the restriction maps
$H^i(X,\mathbb{Z})\to H^i(A_i,\mathbb{Z})$ are injective with torsion free cokernel for
$i=\min\{\dim Z-2,n-r-k\}$.
\end{cor}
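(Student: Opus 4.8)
The plan is to combine the Lefschetz-type conclusion of Corollary \ref{corollary nef and big}(4), applied to the variety $Z$ and its ample divisor $A_i$, with the classical Lefschetz--Sommese theorem for zero loci of $k$-ample vector bundles, applied to the pair $(X,Z)$. First I would record that, by hypothesis, $(\diamondsuit)$ holds on $Z$, so Theorem \ref{thm-1} applies verbatim to $Z$: each component $A_j$ of $A$ is nef and big on $Z$, and in particular $A_i$ is a nef and big Cartier divisor on $Z$ for the distinguished index $i$ with $Z-A_i$ a local complete intersection. This is exactly the setup of Corollary \ref{corollary nef and big}(4) (with $\mathcal{V}$ replaced by $Z$, which is normal of dimension $\dim Z\geq 3$, and with $\mathcal{A}=A_i$). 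Hence the restriction maps give
$$H^j(Z,\mathbb{Z})\cong H^j(A_i,\mathbb{Z}) \quad\textrm{for }\ j\leq \dim Z-3,$$
and $H^j(Z,\mathbb{Z})\to H^j(A_i,\mathbb{Z})$ is injective with torsion-free cokernel for $j=\dim Z-2$.

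Next I would invoke the Lefschetz--Sommese theorem for the zero locus $Z=(s)_0$ of a section of the $k$-ample rank-$r$ bundle $\mathcal{E}$ on $X$ (see \cite[$\S 1$]{S2}, \cite[$\S 2.1$]{BS}): since $Z$ has codimension $r$ in $X$ and $\mathcal{E}$ is $k$-ample, the restriction maps
$$H^i(X,\mathbb{Z})\to H^i(Z,\mathbb{Z})$$
are isomorphisms for $i\leq n-r-k-1$ and injective with torsion-free cokernel for $i=n-r-k$. Composing the two chains of restriction maps, for $i\leq\min\{\dim Z-3,\,n-r-k-1\}$ both $H^i(X,\mathbb{Z})\to H^i(Z,\mathbb{Z})$ and $H^i(Z,\mathbb{Z})\to H^i(A_i,\mathbb{Z})$ are isomorphisms, so $H^i(X,\mathbb{Z})\cong H^i(A_i,\mathbb{Z})$; and for $i=\min\{\dim Z-2,\,n-r-k\}$ each of the two maps in the relevant range is either an isomorphism or injective with torsion-free cokernel, so the composite is injective with torsion-free cokernel as well (a composite of injections with torsion-free cokernel has torsion-free cokernel, using the short exact sequence of cokernels).

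The only delicate point — and the step I expect to require the most care — is matching the numerical thresholds so that at the boundary index the correct map from the two theorems is being used. Specifically, when $\dim Z-2 \leq n-r-k$ the bound coming from Corollary \ref{corollary nef and big}(4) is the binding one and $H^i(X,\mathbb{Z})\to H^i(Z,\mathbb{Z})$ is still an isomorphism there (since $\dim Z-2 \leq n-r-k$ forces $\dim Z - 3 \le n-r-k-1$), so injectivity with torsion-free cokernel passes through cleanly; in the opposite case $n-r-k$ is binding and the Lefschetz--Sommese map supplies the injectivity with torsion-free cokernel while the $Z\hookrightarrow$ restriction to $A_i$ is still an isomorphism. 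One checks both sub-cases to conclude, and this completes the proof.
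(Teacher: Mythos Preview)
Your proposal is correct and follows essentially the same route as the paper: apply Corollary~\ref{corollary nef and big}(4) on $Z$ to compare $H^j(Z,\mathbb{Z})$ with $H^j(A_i,\mathbb{Z})$, then invoke the Lefschetz--Sommese theorem for $k$-ample bundles to compare $H^j(X,\mathbb{Z})$ with $H^j(Z,\mathbb{Z})$, and compose. Your treatment is in fact more detailed than the paper's, which simply records the two ranges of isomorphisms/injections and leaves the composition and boundary bookkeeping implicit.
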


\begin{proof}
From Theorem \ref{thm-1}, we know that each $A_k$ is at
worst $1$-ample on $Z$. Thus by Corollary \ref{corollary nef and
big} (4), we have that
$$H^j(Z,\mathbb{Z})\cong H^j(A_i,\mathbb{Z}) \qquad \mathrm{ for
}\ \ j\leq \dim Z-3,$$ and the restriction maps
$H^j(Z,\mathbb{Z})\to H^j(A_i,\mathbb{Z})$ are injective with torsion free cokernel for
$j=\dim Z-2$. Moreover, from the Lefschetz-Sommese's Theorem for
$k$-ample vector bundles on an $n$-fold $X$
(see \cite[(1.16)]{S2} and \cite[(7.1.1), (7.1.9)]{LazBookbis}),
we deduce that
$$H^j(Z,\mathbb{Z})\cong H^j(X,\mathbb{Z}) \qquad \mathrm{ for
}\ \ j\leq n-r-k-1,$$ and that the restriction maps
$H^j(X,\mathbb{Z})\to H^j(Z,\mathbb{Z})$ are injective with torsion free cokernel for
$j=n-r-k$.
\end{proof}

\subsubsection{Ampleness of the $A_i$'s}\label{subsection ampleness}

Under the same assumption of Theorem
\ref{thm-2}, we first deduce the following

\begin{cor}\label{corollary ample}
Let $\mathcal{V}$ be an integral normal complex projective variety
of dimension $n\geq 3$ with at worst Cohen-Macaulay singularities.
Let $\mathcal{L}$ be an ample line bundle on $\mathcal{V}$ and
assume that $(\diamondsuit)$ holds. Moreover, suppose that $\mathcal{V}-F$ is a
local complete intersection for some finite, possibly empty, set
$F\subset \mathcal{V}-${\em Irr}$(\mathcal{V})$ with
$\dim\mathrm{Irr}(\mathcal{V})\leq 0$, where {\em
Irr}$(\mathcal{V})$ is the set of irrational singularities of
$\mathcal{V}$. Set $D=\sum A_{i_h}$, where all the
$i_h\in\{1,...,r\}$ are not necessarily distinct indexes. Then we
have the following properties:
\begin{itemize}
\item[$(I)$] {\em{(Fujita's Vanishing type Theorem).}} Given any
coherent sheaf $\mathcal{F}$ on $\mathcal{V}$, there exists an
integer $m(\mathcal{F},D)$ such that
$$H^i(\mathcal{V},\mathcal{F}\otimes\mathcal{O}_{\mathcal{V}}(mD+N))=0 \qquad \textrm{ for }\ i>0,
m\geq m(\mathcal{F},D),$$ where $N$ is any nef divisor
on $\mathcal{V}$; \item[$(II)$] $\dim H^0(\mathcal{V},D)\leq D^n+n$,
with equality if and only if $\mathcal{V}$ is one of the following:
$(a)$ $\mathbb{P}^n$; $(b)$ a quadric hypersurface
$\mathbb{Q}^n\subset\mathbb{P}^{n+1}$; $(c)$ a
$\mathbb{P}^{n-1}$-bundle over $\mathbb{P}^1$; $(d)$ a generalized cone over a
smooth submanifold $V\subset \mathcal{V}$ as in $(a), (b), (c)$;
\item[$(III)$] {\em{(Lefschetz type Theorems).}} Let $\mathcal{A}\in
|D|$ be a divisor such that $\mathcal{V}-\mathcal{A}$ is a local
complete intersection. Then under the restriction map it follows
that
$$H^j(\mathcal{V},\mathbb{Z})\cong H^j(\mathcal{A},\mathbb{Z}) \qquad \textrm{ for }\
j\leq n-2,$$ and
$$H^j(\mathcal{V},\mathbb{Z})\to H^j(\mathcal{A},\mathbb{Z}) \qquad \textrm{ for }\ j = n-1,$$ is injective with torsion free
cokernel; moreover, we have that
$\mathrm{Pic}(\mathcal{V})\cong\mathrm{Pic}(\mathcal{A})$\ for $n\geq
4$, and the restriction mapping \ $\mathrm{Pic}(\mathcal{V})\to
\mathrm{Pic}(\mathcal{A})$\ is injective with torsion free
cokernel for $n=3$; in particular, if $\mathcal{V}-A_i$ is a local
complete intersection for some $i=1,...,r$, then $\mathrm{Pic}(\mathcal{V})=\mathbb{Z}\langle\Lambda\rangle$
for $n\geq 3$ and some ample line bundle $\Lambda$ on $\mathcal{V}$.
\end{itemize}
\end{cor}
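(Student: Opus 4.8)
The plan is to reduce the entire statement of Corollary~\ref{corollary ample} to known facts about nef and big divisors once Theorem~\ref{thm-2} has been invoked to upgrade the $A_i$'s (hence $D$) to ample Cartier divisors. First I would observe that the hypotheses on $\mathcal{V}$ are exactly those of Theorem~\ref{thm-2}, so all the $A_i$'s are ample Cartier divisors and the $f_i$ are isomorphisms; in particular the effective divisor $D=\sum A_{i_h}$, being a sum of ample Cartier divisors, is itself an ample Cartier divisor on $\mathcal{V}$. This is the structural input that separates this corollary from Corollary~\ref{corollary nef and big}, where one only had nef and big.

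For part $(I)$, with $D$ ample I would apply Fujita's vanishing theorem in the form stated in \cite[(2.2.?)]{BS} (the ample analogue of the big-and-nef vanishing used earlier): for an ample divisor $D$ and any coherent $\mathcal{F}$ there is $m(\mathcal{F},D)$ so that $H^i(\mathcal{V},\mathcal{F}\otimes\mathcal{O}_\mathcal{V}(mD+N))=0$ for $i>0$, $m\ge m(\mathcal{F},D)$, and every nef $N$. For part $(II)$, again using ampleness of $D$, the bound $\dim H^0(\mathcal{V},D)\le D^n+n$ together with the classification of the equality case is precisely the statement of the $\Delta$-genus-zero / $h^0$-inequality for polarized varieties; I would cite the relevant item of \cite{BS} (the Cohen-Macaulay, possibly singular, version), listing $\mathbb{P}^n$, a quadric, a scroll over $\mathbb{P}^1$, and generalized cones over these. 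No new argument is needed here beyond recording that $D$ is now ample.

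For part $(III)$ I would use the Lefschetz theorems for ample Cartier divisors on a variety which is a local complete intersection away from $\mathcal{A}$: since $\mathcal{V}-\mathcal{A}$ is a local complete intersection and $D$ (hence $\mathcal{A}\in|D|$) is ample, the Sommese--Lefschetz package gives $H^j(\mathcal{V},\mathbb{Z})\cong H^j(\mathcal{A},\mathbb{Z})$ for $j\le n-2$ and injectivity with torsion-free cokernel for $j=n-1$, and correspondingly $\mathrm{Pic}(\mathcal{V})\cong\mathrm{Pic}(\mathcal{A})$ for $n\ge 4$ with the $n=3$ injectivity statement. Taking $\mathcal{A}=A_i$ for the index $i$ with $\mathcal{V}-A_i$ a local complete intersection, and combining with $\mathrm{Pic}(A_i)=\mathbb{Z}$ from $(\diamondsuit)$, yields $\mathrm{Pic}(\mathcal{V})=\mathbb{Z}\langle\Lambda\rangle$ with $\Lambda$ ample for all $n\ge 3$; this last deduction already appears in Corollary~\ref{cor-1}, so I would simply refer to it. The only point requiring a little care is making sure the cited Lefschetz statements are available in the ample (rather than merely big-and-nef) case and under the stated singularity hypotheses — but these are standard and are exactly the $(2.3.x)$ results of \cite{BS} already used in Corollary~\ref{corollary nef and big}, now applied with the stronger positivity coming from Theorem~\ref{thm-2}. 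I expect the main (very mild) obstacle to be bookkeeping: confirming that passing from "nef and big" to "ample" indeed improves the ranges $j\le n-3$, $n\ge 5$ of Corollary~\ref{corollary nef and big}(4) to the ranges $j\le n-2$, $n\ge 4$ claimed here, which is precisely what the ample versions of the Sommese--Lefschetz theorems provide.

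\begin{proof}
Since $\mathcal{V}$ satisfies the hypotheses of Theorem~\ref{thm-2}, every $A_i$ is an ample Cartier divisor on $\mathcal{V}$ and each $f_i$ is an isomorphism. Hence the effective divisor $D=\sum A_{i_h}$, being a sum of ample Cartier divisors, is an ample Cartier divisor on $\mathcal{V}$.

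$(I)$ follows from Fujita's vanishing theorem applied to the ample divisor $D$, cf.\ \cite[(2.2.?)]{BS}: there is an integer $m(\mathcal{F},D)$ such that $H^i(\mathcal{V},\mathcal{F}\otimes\mathcal{O}_{\mathcal{V}}(mD+N))=0$ for all $i>0$, all $m\geq m(\mathcal{F},D)$ and every nef divisor $N$ on $\mathcal{V}$.

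$(II)$ is the inequality $\dim H^0(\mathcal{V},D)\leq D^n+n$ for the polarized pair $(\mathcal{V},D)$, together with the description of the equality case, see \cite[(2.?.?)]{BS}; in the equality case $(\mathcal{V},D)$ is one of $(\mathbb{P}^n,\mathcal{O}(1))$, a quadric $\mathbb{Q}^n\subset\mathbb{P}^{n+1}$, a $\mathbb{P}^{n-1}$-bundle over $\mathbb{P}^1$, or a generalized cone over one of these.

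$(III)$ Since $D$ is ample and $\mathcal{V}-\mathcal{A}$ is a local complete intersection, the Sommese--Lefschetz theorems (the ample versions of \cite[(2.3.3), (2.3.4)]{BS}) give $H^j(\mathcal{V},\mathbb{Z})\cong H^j(\mathcal{A},\mathbb{Z})$ for $j\leq n-2$, injectivity with torsion free cokernel of $H^{n-1}(\mathcal{V},\mathbb{Z})\to H^{n-1}(\mathcal{A},\mathbb{Z})$, the isomorphism $\mathrm{Pic}(\mathcal{V})\cong\mathrm{Pic}(\mathcal{A})$ for $n\geq 4$, and injectivity with torsion free cokernel of $\mathrm{Pic}(\mathcal{V})\to\mathrm{Pic}(\mathcal{A})$ for $n=3$. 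Taking $\mathcal{A}=A_i$ for an index $i$ with $\mathcal{V}-A_i$ a local complete intersection and using $\mathrm{Pic}(A_i)=\mathbb{Z}$ from $(\diamondsuit)$ together with the argument of Corollary~\ref{cor-1}, we conclude $\mathrm{Pic}(\mathcal{V})=\mathbb{Z}\langle\Lambda\rangle$ for $n\geq 3$ and some ample line bundle $\Lambda$ on $\mathcal{V}$.
\end{proof}
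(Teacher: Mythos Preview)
Your proposal is correct and follows essentially the same route as the paper: first use Theorem~\ref{thm-2} (the paper phrases it via the Nakai--Moishezon--Kleiman criterion, but ``a sum of ample Cartier divisors is ample'' is equivalent) to make $D$ ample, then quote Fujita's vanishing for $(I)$, the $\Delta$-genus inequality/classification (the paper cites \cite[I (4.2), (5.10), (5.15)]{F1} rather than \cite{BS}) for $(II)$, and \cite[(2.3.3), (2.3.4)]{BS} for $(III)$. The only cosmetic difference is the choice of references; your placeholder citations should be replaced by \cite[\S1, Th.~1]{F2} (or \cite[(1.4.35)]{LazBook}) and \cite[I (4.2), (5.10), (5.15)]{F1}.
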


\begin{proof}
By Nakai-Moishezon-Kleiman criterion and
Theorem \ref{thm-2}, we see that $D=\sum A_{i_h}$ is
ample on $\mathcal{V}$. Thus case $\mathit{(I)}$ of the statement
follows from \cite[$\S 1$, Th.1]{F2} (see also
\cite[(1.4.35)]{LazBook}. Finally, we obtain case $\mathit{(II)}$ by
\cite[I (4.2), (5.10), (5.15)]{F1}, while $\mathit{(III)}$ follows
from \cite[(2.3.3)]{BS} and \cite[(2.3.4)]{BS} respectively.
\end{proof}

Finally, let us deduce also the following result for ample vector
bundles on a smooth variety.

\begin{cor}\label{cor rho=1 for vector bundles}
Let $\mathcal{E}$ be an ample vector bundle of rank $r\geq 1$ on an
$n$-fold $X$ with $n\geq 4$. Assume that there exists a section
$s\in\Gamma (\mathcal{E})$ whose zero locus $Z=(s)_0$ is a smooth
submanifold of $X$. Suppose that there exists a divisor $A=A_1+...+A_s, s\geq
1$, on $Z$ which satisfies $(\diamondsuit)$. If $r<n-2$ then both
$\mathrm{Pic}(X)$ and
$\mathrm{Pic}(Z)$ have rank one.
\end{cor}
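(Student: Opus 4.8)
The plan is to deduce this corollary from Corollary \ref{cor rho=1 for vector bundles}'s companion result, namely the Lefschetz-Sommese type statement of Corollary \ref{Corollary Pic-Theorem for vector bundles}, together with the ampleness results proved in Theorem \ref{thm-2} and Corollary \ref{corollary ample}. First I would note that since $\mathcal{E}$ is ample of rank $r$ on $X$ and $Z=(s)_0$ is a smooth submanifold of dimension $n-r$, the hypothesis $r<n-2$ gives $\dim Z=n-r>2$, so $\dim Z\geq 3$ and the smooth variety $Z$ is in particular integral, normal, Gorenstein with empty (hence dimension $\leq 0$) irrational singular locus. Thus $(\diamondsuit)$ holds on $Z$ and every hypothesis needed to run Theorem \ref{thm-2} on the polarized pair $(Z,\mathcal{L})$ — where $\mathcal{L}$ is the ample line bundle with $A\in|\mathcal{L}|$ — is in force: $Z$ is smooth, hence Cohen-Macaulay and a local complete intersection, so we may take $F=\varnothing$. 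Consequently each $A_i$ is an ample Cartier divisor on $Z$, and moreover, since $Z-A_i$ is (as $Z$ is smooth) a local complete intersection, Corollary \ref{corollary ample} $(III)$ applies and yields $\mathrm{Pic}(Z)=\mathbb{Z}\langle\Lambda\rangle$ for some ample line bundle $\Lambda$ on $Z$; in particular $\mathrm{Pic}(Z)$ has rank one.

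Next I would transfer this to $X$ via Corollary \ref{Corollary Pic-Theorem for vector bundles}. An ample vector bundle is $0$-ample, so we may take $k=0$ there. Its conclusion gives isomorphisms $H^i(X,\mathbb{Z})\cong H^i(A_i,\mathbb{Z})$ for $i\leq\min\{\dim Z-3,\,n-r-1\}$ and injectivity with torsion-free cokernel for $i=\min\{\dim Z-2,\,n-r\}$. Since $\dim Z=n-r$, we have $\dim Z-3=n-r-3<n-r-1$ and $\dim Z-2=n-r-2<n-r$, so both minima are realized by the first term: the isomorphism holds for $i\leq n-r-3$ and the injective map for $i=n-r-2$. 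The hypothesis $r<n-2$, i.e. $n-r-2>0$, guarantees $n-r-3\geq 0$ and $n-r-2\geq 1$, which is exactly the range needed to compare $H^2$. Indeed, combining the chain $H^i(X,\mathbb{Z})\to H^i(Z,\mathbb{Z})\to H^i(A_i,\mathbb{Z})$ — whose composite is the map from Corollary \ref{Corollary Pic-Theorem for vector bundles} — with the isomorphism $H^i(Z,\mathbb{Z})\cong H^i(A_i,\mathbb{Z})$ of Corollary \ref{corollary nef and big} $(4)$ valid for $i\leq\dim Z-3=n-r-3$ (and injective with torsion-free cokernel for $i=n-r-2$), one extracts that $H^i(X,\mathbb{Z})\cong H^i(Z,\mathbb{Z})$ for $i\leq n-r-3$ and $H^{n-r-2}(X,\mathbb{Z})\hookrightarrow H^{n-r-2}(Z,\mathbb{Z})$ has torsion-free cokernel; this is just the Lefschetz-Sommese theorem for $\mathcal{E}$ on $X$ used inside that proof, so in practice I would invoke that directly rather than re-deriving it.

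To conclude that $\mathrm{Pic}(X)$ has rank one, I would pass to the exponential sequences of $X$ and $Z$. Since $X$ and $Z$ are smooth projective, $\mathrm{Pic}(X)\otimes\mathbb{Q}$ injects into $H^2(X,\mathbb{Q})$ and likewise for $Z$, and the first Chern class identifies $\mathrm{NS}(X)_{\mathbb{Q}}$ with its image. The case $n-r-3\geq 2$, i.e. $r\leq n-5$, gives $H^2(X,\mathbb{Z})\cong H^2(Z,\mathbb{Z})$ outright; in the borderline cases $r=n-3$ and $r=n-4$ one has $n-r-2\in\{1,2\}$ and uses instead the injectivity of $H^2(X,\mathbb{Z})\to H^2(Z,\mathbb{Z})$ (when $n-r-2=2$) together with $H^1$-comparison, or — cleaner — one invokes the Grothendieck-Lefschetz type statement that $\mathrm{Pic}(X)\to\mathrm{Pic}(Z)$ is injective whenever $\dim Z\geq 2$ and an ample bundle is involved, which is part of the Sommese vanishing machinery cited in \cite{BS} and in \cite[(7.1.1)]{LazBookbis}. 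Either way, the map $\mathrm{Pic}(X)\to\mathrm{Pic}(Z)$ is injective, so $\mathrm{rank}\,\mathrm{Pic}(X)\leq\mathrm{rank}\,\mathrm{Pic}(Z)=1$; since $X$ carries the ample bundle $\det\mathcal{E}$ (or the restriction of any polarization), $\mathrm{Pic}(X)$ has rank exactly one. The main obstacle is bookkeeping the two competing bounds $\dim Z-2$ and $n-r-k$ in Corollary \ref{Corollary Pic-Theorem for vector bundles} and checking that under $r<n-2$ the range reaches degree $2$ as required to pin down the Néron-Severi rank; once the numerics $n-r-3\geq 0$ and $n-r-2\geq 1$ are in hand, the rest is a formal chase through the restriction maps and the exponential sequence.
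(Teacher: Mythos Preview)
Your argument is correct in outline and ultimately lands on the same two ingredients the paper uses, but the paper's route is much shorter and avoids the detour that gives you trouble. The paper simply applies Corollary~\ref{corollary ample}\,\textit{(III)} to the smooth variety $Z$ (of dimension $n-r\geq 3$ since $r<n-2$) to get $\mathrm{Pic}(Z)=\mathbb{Z}\langle\Lambda\rangle$, and then invokes Sommese's Lefschetz theorem for ample vector bundles, \cite[(1.16)]{S2} or \cite[(7.1.5)(ii)]{LazBookbis}, which for $\dim Z\geq 3$ gives directly $\mathrm{Pic}(X)\cong\mathrm{Pic}(Z)$. That is the whole proof.

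Your detour through Corollary~\ref{Corollary Pic-Theorem for vector bundles}, which compares $H^i(X,\mathbb{Z})$ with $H^i(A_i,\mathbb{Z})$ rather than with $H^i(Z,\mathbb{Z})$, costs you one degree in the Lefschetz range (the minimum is $\dim Z-3=n-r-3$, not $n-r-1$), and as you yourself observe it fails to reach $H^2$ when $\dim Z=3$. The exponential-sequence bookkeeping and the case split on $r=n-3,\,n-4$ are then needed only to repair this self-inflicted loss. Your ``cleaner'' fallback --- invoking the Sommese/Grothendieck--Lefschetz statement for $\mathrm{Pic}(X)\to\mathrm{Pic}(Z)$ directly --- is exactly what the paper does from the start; note also that under $r<n-2$ this map is an \emph{isomorphism}, not merely an injection, so no nontriviality argument via $\det\mathcal{E}$ is required.
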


\begin{proof}
It follows easily from Corollary
\ref{corollary ample} \textit{(III)} and \cite[(1.16)]{S2}, or
\cite[(7.1.5)(ii)]{LazBookbis}.
\end{proof}

\section{Some applications}\label{applications}

Here are two applications.

\subsection{All the $A_i$'s are Fano varieties of Picard rank one}\label{Fano}

First of all, let us prove the following

\begin{lem}\label{Lemma}
Let $\mathcal{L},\mathcal{V},A_i,\mathcal{V}_i,f_i$ be as in Theorem \ref{thm-1}. If $\mathcal{V}_k$ is
$\mathbb{Q}$-factorial for some $k=1,...,r$, then $A_k$ is ample on $\mathcal{V}$ and $f_k:\mathcal{V}\to\mathcal{V}_k$
is in fact an isomorphism.
\end{lem}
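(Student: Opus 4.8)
The plan is to prove that $f_k$ is a \emph{small} birational contraction (an isomorphism away from a closed subset of codimension at least $2$) and then to use the $\mathbb{Q}$-factoriality of $\mathcal{V}_k$ to upgrade ``small'' to ``isomorphism''; once $f_k$ is an isomorphism, the ampleness of $A_k$ on $\mathcal{V}$ is immediate from Theorem \ref{thm-1}.

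First I would check that $f_k:\mathcal{V}\to\mathcal{V}_k$ is small. By Theorem \ref{thm-1} it is a proper birational morphism of normal projective varieties contracting at most finitely many curves. Since $\mathcal{V}_k$ is normal, a point of $\mathcal{V}$ lies in the exceptional locus $\mathrm{Exc}(f_k)$ only if it belongs to a positive-dimensional fibre of $f_k$, hence to one of these finitely many contracted curves; thus $\dim\mathrm{Exc}(f_k)\le 1$, and as $n=\dim\mathcal{V}\ge 3$ this forces $\mathrm{codim}_{\mathcal{V}}\mathrm{Exc}(f_k)\ge n-1\ge 2$. In particular $f_k$ has no exceptional divisor and is an isomorphism in codimension one. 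Next, if $f_k$ were not an isomorphism then, since a proper birational morphism with all fibres finite is finite and hence---being birational onto the normal variety $\mathcal{V}_k$---an isomorphism by Zariski's Main Theorem, there would be a point $p\in\mathcal{V}_k$ with $\dim f_k^{-1}(p)\ge 1$; the projective fibre $f_k^{-1}(p)$ then contains an irreducible curve $C$ with $(f_k)_{*}C=0$.

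To derive a contradiction I would use that $f_k$ is an isomorphism in codimension one: the Weil divisor $E:=(f_k)_{*}\mathcal{L}$ on $\mathcal{V}_k$ has proper transform equal to $\mathcal{L}$, it is $\mathbb{Q}$-Cartier because $\mathcal{V}_k$ is $\mathbb{Q}$-factorial, and since $f_k$ has no exceptional divisor there is no exceptional correction term, so $f_k^{*}E=\mathcal{L}$ as $\mathbb{Q}$-Cartier divisors on $\mathcal{V}$. The projection formula then gives $\mathcal{L}\cdot C=f_k^{*}E\cdot C=E\cdot (f_k)_{*}C=0$, contradicting the ampleness of $\mathcal{L}$. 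Hence $f_k$ is an isomorphism, and then $A_k=f_k^{*}(f_k(A_k))$ with $f_k(A_k)$ ample on $\mathcal{V}_k$ by Theorem \ref{thm-1}, so $A_k$ is ample on $\mathcal{V}$.

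The one delicate point---the main obstacle---is the identity $f_k^{*}E=\mathcal{L}$: one must know that pulling back the $\mathbb{Q}$-Cartier divisor $E$ produces $\mathcal{L}$ exactly, with no exceptional summand. This is precisely where the conclusion of Theorem \ref{thm-1} that $f_k$ contracts only finitely many curves (together with $n\ge 3$) is essential, since it is what guarantees that $f_k$ is small; the remaining steps are standard.
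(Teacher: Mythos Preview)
Your proof is correct and follows essentially the same approach as the paper: both use Theorem \ref{thm-1} to see that $f_k$ is small (an isomorphism in codimension one), then use the $\mathbb{Q}$-factoriality of $\mathcal{V}_k$ to show that $\mathcal{L}$ is the $\mathbb{Q}$-pullback of its pushforward, and finally reach a contradiction by intersecting with a contracted curve. The only cosmetic difference is that the paper phrases the key identity $f_k^{*}(f_k)_{*}(N\mathcal{D})=N\mathcal{D}$ via a commutative diagram and Hartogs' Lemma (working with an arbitrary line bundle $\mathcal{D}$ before specializing to $\mathcal{L}$), whereas you invoke the equivalent fact that a small morphism has no exceptional-divisor correction and apply it directly to $\mathcal{L}$.
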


\begin{proof}
Take any line bundle $\mathcal{D}$ on
$\mathcal{V}$ and consider the following commutative diagram $(*)$ :
\begin{displaymath}
    \xymatrix{
        U \ar[r]^{{f_k}|_{U}} \ar[d]_j & U' \ar[d]^{j_k} \\
        \mathcal{V} \ar[r]_{f_k}       & \mathcal{V}_k }
\end{displaymath}
where $j:U\to \mathcal{V}$ and $j_k:U'\to \mathcal{V}_k$ are the
inclusion maps and ${f_k}|_{U}:U \to U'$ is the isomorphism induced
by ${f_k}:\mathcal{V}\to \mathcal{V}_k$. Since $\mathcal{V}_k$ is
$\mathbb{Q}$-factorial, we see that
${f_k}_*(N\mathcal{D})=\mathcal{L}'$ is a line bundle on
$\mathcal{V}_k$ for some positive integer $N$. Write
$\mathcal{L}'=\sum_h a_h {\mathcal{L}'}_h$, where ${\mathcal{L}'}_h$
are the generators of Pic$(\mathcal{V}_k)$. Then by $(*)$ we get
$$N\mathcal{D}|_{U}=j^* (N\mathcal{D})={{f_k}|_{U}}_*j^* (N\mathcal{D})={j_k}^*{f_k}_* (N\mathcal{D})=
\sum_h a_h {j_k}^*{\mathcal{L}_{h}^{'}}=$$
$$=\sum_h a_h
({f_k}|_{U})^*{j_k}^*{\mathcal{L}_{h}^{'}}=\sum_h a_h
j^*{f_k}^*{\mathcal{L}_{h}^{'}}=j^*(\sum_h a_h
{f_k}^*{\mathcal{L}_{h}^{'}})=(\sum_h a_h
{f_k}^*{\mathcal{L}_{h}^{'}})|_{U}.$$ By Hartogs' Lemma (see, e.g.,
\cite[(11.4)]{E}), this gives $N\mathcal{D}=\sum_h
a_hf_k^*(\mathcal{L}_{h}^{'})$, i.e.
$\mathcal{D}=\sum_h\frac{a_h}{N}f_k^*(\mathcal{L}_{h}^{'})$.
Therefore, if $A_k$ is not ample, then we deduce that there exists
an irreducible curve $\Gamma\subset\mathcal{V}$ such that
$m_kA_k\cdot\Gamma=0$ for any positive integer $m_k$, i.e. the map
$f_k$ contracts the curve $\Gamma$. Hence
$f_k^*(\mathcal{L}_{h}^{'})\cdot\Gamma =0$ for any $h$, i.e.
$\mathcal{D}\cdot\Gamma=0$, but this leads to a contradiction by
taking $\mathcal{D}=\mathcal{L}$.
\end{proof}

\bigskip

Similar results as in \cite[Theorem 1]{Ba} (see
also \cite[Prop.VI]{S1}) for the case of reducible ample divisors on
$\mathcal{V}$ can be now proved.

\begin{pro}\label{prop-2}
Let $\mathcal{V}$ be an integral normal complex projective variety
of dimension $n\geq 3$ and let $\mathcal{L}$ be an ample line
bundle on $\mathcal{V}$. Assume that $(\diamondsuit)$ holds. If,
up to renaming, $A_1\cong\mathbb{P}^{n-1}$, then $\mathcal{V}$ is
the cone $\mathcal{C}(\mathbb{P}^{n-1},\mathcal{O}_{\mathbb{P}^{n-1}}(s))$
on $(\mathbb{P}^{n-1},\mathcal{O}_{\mathbb{P}^{n-1}}(s))$ with
$A_1=v_s(\mathbb{P}^{n-1})$ and
$\mathcal{N}_{A_1/\mathcal{V}}\cong\mathcal{O}_{\mathbb{P}^{n-1}}(s)$
for a suitable integer $s>0$, where $v_s$ is the $s^{\textrm{th}}$
Veronese embedding of $\mathbb{P}^{n-1}$.
\end{pro}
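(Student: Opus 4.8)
The plan is to exploit Theorem \ref{thm-1} applied to the component $A_1\cong\mathbb{P}^{n-1}$ together with the classical theory of varieties containing a projective space as an ample (or nef and big) divisor. First I would use Theorem \ref{thm-1} to get the proper birational morphism $f_1:\mathcal{V}\to\mathcal{V}_1$ onto a normal projective variety $\mathcal{V}_1$, which is an isomorphism in a neighborhood of $A_1$ and such that $f_1(A_1)$ is an ample effective divisor on $\mathcal{V}_1$. Since $A_1\cong\mathbb{P}^{n-1}$ has $\mathrm{Pic}(A_1)=\mathbb{Z}\langle\mathcal{O}_{\mathbb{P}^{n-1}}(1)\rangle$, the normal bundle (equivalently, $[A_1]_{A_1}$) is $\mathcal{O}_{\mathbb{P}^{n-1}}(s)$ for the positive integer $s=a_{11}$ defined in the proof of Theorem \ref{thm-1}. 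On $\mathcal{V}_1$, the image $f_1(A_1)\cong\mathbb{P}^{n-1}$ is an \emph{ample} Cartier divisor with normal bundle $\mathcal{O}_{\mathbb{P}^{n-1}}(s)$, so $(\mathcal{V}_1,f_1(A_1))$ falls under the Bădescu--Sommese classification: a normal projective variety of dimension $n\geq 3$ containing $\mathbb{P}^{n-1}$ as an ample divisor with normal bundle $\mathcal{O}(s)$, $s>0$, is exactly the generalized cone $\mathcal{C}(\mathbb{P}^{n-1},\mathcal{O}_{\mathbb{P}^{n-1}}(s))$, with $f_1(A_1)$ identified with the Veronese-embedded $v_s(\mathbb{P}^{n-1})$ (this is \cite[Theorem 1]{Ba}, cf.\ \cite[Prop.VI]{S1}). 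Thus $\mathcal{V}_1\cong\mathcal{C}(\mathbb{P}^{n-1},\mathcal{O}_{\mathbb{P}^{n-1}}(s))$.

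It remains to show $f_1$ is an isomorphism, i.e.\ $\mathcal{V}\cong\mathcal{V}_1$. Here I would invoke Lemma \ref{Lemma}: it suffices to check that $\mathcal{V}_1=\mathcal{C}(\mathbb{P}^{n-1},\mathcal{O}_{\mathbb{P}^{n-1}}(s))$ is $\mathbb{Q}$-factorial. A generalized cone over $\mathbb{P}^{n-1}$ polarized by $\mathcal{O}(s)$ has Picard group (and indeed class group) of rank one — all Weil divisors are $\mathbb{Q}$-linearly equivalent to a multiple of the hyperplane class pulled back from $\mathbb{P}^{n-1}$, and the single vertex (when $s>1$) or the smooth point structure (when $s=1$) causes no obstruction to $\mathbb{Q}$-Cartierness. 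So $\mathcal{V}_1$ is $\mathbb{Q}$-factorial, Lemma \ref{Lemma} applies with $k=1$, and $f_1:\mathcal{V}\to\mathcal{V}_1$ is an isomorphism. Transporting the cone structure back along $f_1^{-1}$ gives $\mathcal{V}\cong\mathcal{C}(\mathbb{P}^{n-1},\mathcal{O}_{\mathbb{P}^{n-1}}(s))$ with $A_1=v_s(\mathbb{P}^{n-1})$ and $\mathcal{N}_{A_1/\mathcal{V}}\cong\mathcal{O}_{\mathbb{P}^{n-1}}(s)$, which is the desired conclusion.

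I would present the argument in three short steps: (i) produce $f_1$ and record that $f_1(A_1)\cong\mathbb{P}^{n-1}$ is ample in $\mathcal{V}_1$ with $\mathcal{N}_{f_1(A_1)/\mathcal{V}_1}\cong\mathcal{O}_{\mathbb{P}^{n-1}}(s)$; (ii) apply the Bădescu--Sommese classification to identify $\mathcal{V}_1$ with the generalized cone; (iii) verify $\mathbb{Q}$-factoriality of the cone and conclude via Lemma \ref{Lemma} that $f_1$ is an isomorphism. The main obstacle I anticipate is step (iii): one must be careful that the relevant positivity/classification input (\cite[Theorem 1]{Ba}) is stated in enough generality to cover the possibly singular normal variety $\mathcal{V}_1$ produced by $f_1$, and one must genuinely justify that the generalized cone is $\mathbb{Q}$-factorial rather than merely having rank-one Picard group — it is this $\mathbb{Q}$-factoriality, not just $\rho=1$, that Lemma \ref{Lemma} requires. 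Once that is in place, everything else is a direct application of results already established in the paper.
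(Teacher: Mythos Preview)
Your proposal is correct and follows essentially the same route as the paper's own proof: apply Theorem~\ref{thm-1} to obtain $f_1:\mathcal{V}\to\mathcal{V}_1$ with $f_1(A_1)\cong\mathbb{P}^{n-1}$ ample, invoke \cite[Theorem~1]{Ba} to identify $\mathcal{V}_1$ with the cone $\mathcal{C}(\mathbb{P}^{n-1},\mathcal{O}_{\mathbb{P}^{n-1}}(s))$, and then use the $\mathbb{Q}$-factoriality of this cone together with Lemma~\ref{Lemma} to conclude that $f_1$ is an isomorphism. The only cosmetic difference is that the paper, after establishing that $f_1$ is an isomorphism and hence $A_1$ is ample on $\mathcal{V}$, re-applies \cite[Theorem~1]{Ba} directly to the pair $(\mathcal{V},A_1)$ rather than transporting along $f_1^{-1}$; since $f_1$ is an isomorphism these amount to the same thing.
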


\begin{proof}
Since $(\diamondsuit)$ holds and $A_1\cong\mathbb{P}^{n-1}$, by
Theorem \ref{thm-1} we have that $f_1(A_1)\cong\mathbb{P}^{n-1}$
is ample on $\mathcal{V}_1$. By \cite[Theorem 1]{Ba} we see that
$\mathcal{V}_1$ is the cone
$\mathcal{C}(\mathbb{P}^{n-1},\mathcal{O}_{\mathbb{P}^{n-1}}(s))$
over $(\mathbb{P}^{n-1},\mathcal{O}_{\mathbb{P}^{n-1}}(s))$, where
$s$ is a positive integer such that
$\mathcal{N}_{f_1(A_1)/\mathcal{V}_1}\cong\mathcal{O}_{\mathbb{P}^{n-1}}(s)$.

Since $\mathcal{V}_1$ is $\mathbb{Q}$-factorial and
$\mathrm{Pic}(\mathcal{V}_1)=\mathbb{Z}$, by Lemma \ref{Lemma} we
deduce that $f_1$ is an isomorphism and
$\mathrm{Pic}(\mathcal{V})=\mathbb{Z}$. Therefore, $A_1$ is an ample
divisor on $\mathcal{V}$ and by applying now \cite[Theorem 1]{Ba} to
the pair $(\mathcal{V},A_1)$, we get that
$\mathcal{V}\cong\mathcal{C}(\mathbb{P}^{n-1},\mathcal{O}_{\mathbb{P}^{n-1}}(s))$,
$A_1\cong v_s(\mathbb{P}^{n-1})$ and $\mathcal{N}_{A_1/\mathcal{V}}\cong\mathcal{O}_{\mathbb{P}^{n-1}}(s)$
for a suitable integer $s>0$.
\end{proof}

\begin{pro}\label{prop-3}
Let $\mathcal{V}$ be an integral normal Gorenstein projective
variety of dimension $n\geq 3$ and let $\mathcal{L}$ be an ample
line bundle on $\mathcal{V}$. Suppose that $(\diamondsuit)$ holds
and that $\dim\mathrm{Irr}(\mathcal{V})\leq 0$, where
$\mathrm{Irr}(\mathcal{V})$ is the set of irrational singularities
of $\mathcal{V}$. Assume that each $A_i$ is a normal Gorenstein
variety such that
$K_{A_i}+\tau_i\mathcal{H}_i\simeq\mathcal{O}_{A_i}$ for some integer $\tau_i$. If
$\mathcal{V}-A_k$ is a local complete intersection for some
$k=1,...,r$ and either $n\geq 4$, or $n=3$ and $\mathcal{V}-F$ is a
local complete intersection for some finite, possibly empty set
$F\subset\mathcal{V}-\mathrm{Irr}(\mathcal{V})$, then
$\mathrm{Pic}(\mathcal{V})=\mathbb{Z}\langle\Lambda\rangle$, where
$\Lambda$ is an ample line bundle on $\mathcal{V}$,
$K_{\mathcal{V}}=\rho\Lambda$, $A_i=a_i\Lambda$ and
$\Lambda_{A_i}=h_i\mathcal{H}_i$ with $\tau_i=-h_i(\rho+a_i)$, where $\rho$,
$a_i>0$ and $h_i>0$ are integers. In particular, for $n\geq 5$
we have $h_i=1$ for every $i=1,...,r$.
\end{pro}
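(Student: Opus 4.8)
The plan is to bootstrap from the results already established. The key input is that under the stated hypotheses Corollary~\ref{cor-1} applies: in case $n\geq 4$ we invoke the "in particular" clause of Corollary~\ref{corollary ample}~$(III)$ with $\mathcal{A}=A_k$ (using that $\mathcal{V}-A_k$ is a local complete intersection, that $\mathcal{V}$ is Gorenstein hence Cohen--Macaulay, and that $\dim\mathrm{Irr}(\mathcal{V})\leq 0$ with $F=\emptyset$), while in case $n=3$ we use condition $(b)$ of Corollary~\ref{cor-1} with the given set $F$. Either way we get $\mathrm{Pic}(\mathcal{V})=\mathbb{Z}\langle\Lambda\rangle$ for an ample line bundle $\Lambda$, and moreover by Theorem~\ref{thm-2} (whose hypotheses are subsumed) all the $A_i$ are ample Cartier divisors on $\mathcal{V}$ and the $f_i$ are isomorphisms. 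First I would record these consequences and then simply decode everything in terms of $\Lambda$.

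Next I would write $A_i=a_i\Lambda$ in $\mathrm{Pic}(\mathcal{V})$; ampleness of $A_i$ together with ampleness of $\Lambda$ forces $a_i>0$. Write $K_{\mathcal{V}}=\rho\Lambda$ (well-defined since $\mathcal{V}$ is Gorenstein, so $K_{\mathcal{V}}$ is a genuine line bundle) for some integer $\rho$. Restricting to $A_i$, since $\mathrm{Pic}(A_i)$ has rank one with ample generator $\mathcal{H}_i$ we may write $\Lambda_{A_i}=h_i\mathcal{H}_i$, and ampleness of $\Lambda_{A_i}$ gives $h_i>0$. By the adjunction formula for the ample Cartier divisor $A_i$ on the Gorenstein variety $\mathcal{V}$, namely $K_{A_i}\simeq(K_{\mathcal{V}}+A_i)_{A_i}$, we compute $K_{A_i}\simeq((\rho+a_i)\Lambda)_{A_i}=(\rho+a_i)h_i\mathcal{H}_i$. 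Comparing with the hypothesis $K_{A_i}+\tau_i\mathcal{H}_i\simeq\mathcal{O}_{A_i}$, i.e. $K_{A_i}\simeq-\tau_i\mathcal{H}_i$, and using that $\mathrm{Pic}(A_i)$ is torsion-free of rank one (so coefficients of $\mathcal{H}_i$ are determined), yields $\tau_i=-h_i(\rho+a_i)$. This establishes all the numerical relations in the statement.

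For the final assertion that $h_i=1$ when $n\geq 5$, I would argue via the cokernel-freeness in the Lefschetz statement. When $n\geq 5$, Corollary~\ref{corollary ample}~$(III)$ (again with $\mathcal{A}=A_k$) gives $\mathrm{Pic}(\mathcal{V})\cong\mathrm{Pic}(A_k)$ under restriction, an isomorphism of groups; since $\Lambda$ generates $\mathrm{Pic}(\mathcal{V})$ and its image $\Lambda_{A_k}=h_k\mathcal{H}_k$ must then generate $\mathrm{Pic}(A_k)=\mathbb{Z}\langle\mathcal{H}_k\rangle$, we are forced to have $h_k=\pm 1$, hence $h_k=1$ by positivity. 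The same argument applies to every index for which $\mathcal{V}-A_i$ is a local complete intersection; for the remaining indices one instead uses that $A_i$ meets $A_k$ (as shown in the proof of Theorem~\ref{thm-2}) in a divisor which is ample in both, together with the fact that $\Lambda$ already generates $\mathrm{Pic}(\mathcal{V})$, to identify $\Lambda_{A_i}$ up to the restriction map — the cleanest route is to observe that since all $A_i$ are ample and $\Lambda$-proportional, the isomorphism $\mathrm{Pic}(\mathcal{V})\cong\mathrm{Pic}(A_k)$ already pins down the ambient geometry, and then apply the classification in \cite[Theorem~1]{Ba} or directly the structure of $(\mathcal{V},\Lambda)$ to see each $\Lambda_{A_i}$ is the ample generator. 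The main obstacle I anticipate is precisely handling the components $A_i$ with $i\neq k$ for which no local complete intersection hypothesis is assumed: one must avoid claiming a Lefschetz isomorphism $\mathrm{Pic}(\mathcal{V})\cong\mathrm{Pic}(A_i)$ that is not licensed, and instead extract $h_i=1$ from the already-known rank-one picture of $\mathcal{V}$ rather than from a restriction theorem on $A_i$ itself.
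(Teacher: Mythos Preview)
Your overall strategy---Lefschetz to get $\mathrm{Pic}(\mathcal{V})=\mathbb{Z}\langle\Lambda\rangle$, then adjunction---matches the paper exactly, and the adjunction computation in your second paragraph is correct and essentially identical to the paper's.

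There is, however, a genuine citation error in your first paragraph for the case $n\geq 4$. You invoke Corollary~\ref{corollary ample}~$(III)$ (and Corollary~\ref{cor-1}) ``with $F=\emptyset$'', but the standing hypothesis of Corollary~\ref{corollary ample} is that $\mathcal{V}-F$ is a local complete intersection for some \emph{finite} set $F$; taking $F=\emptyset$ would require $\mathcal{V}$ itself to be a local complete intersection, which the proposition does \emph{not} assume when $n\geq 4$. The only LCI hypothesis available there is that $\mathcal{V}-A_k$ is LCI, and $A_k$ is a divisor, not a finite set. The paper instead uses case~(4) of Corollary~\ref{corollary nef and big}, whose only global hypothesis is $(\diamondsuit)$: since each $A_i$ is nef and big by Theorem~\ref{thm-1}, the nef--big Lefschetz theorem there already gives $\mathrm{Pic}(\mathcal{V})\cong\mathrm{Pic}(A_k)$ for $n\geq 5$ and an injection with torsion-free cokernel for $n=4$, which suffices. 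This is an easy fix, but as written your argument does not go through.

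On the final assertion $h_i=1$ for $n\geq 5$: you are more cautious than the paper. The paper simply asserts that case~(4) of Corollary~\ref{corollary nef and big} yields $\mathrm{Pic}(\mathcal{V})\cong\mathrm{Pic}(A_i)$ for \emph{every} $i$ when $n\geq 5$, and immediately reads off $\Lambda_{A_i}\simeq\mathcal{H}_i$. It does not introduce any workaround via intersections $A_i\cap A_k$ or \cite{Ba}; your proposed detour through those is unnecessary for matching the paper's argument.
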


\begin{proof}
Assume that
$n\geq 4$. From case $(4)$ of Corollary \ref{corollary nef and big}
it follows that either (a) $n\geq 5$ and
$\mathrm{Pic}(\mathcal{V})\cong\mathrm{Pic}(A_i)=\mathbb{Z}$ for any
$i=1,...,r$, or (b) $n=4$ and $\mathrm{Pic}(\mathcal{V})$ restricts
injectively into $\mathrm{Pic}(A_i)=\mathbb{Z}$. In both situations,
we see that
$\mathrm{Pic}(\mathcal{V})=\mathbb{Z}\langle\Lambda\rangle$ for some
ample line bundle $\Lambda$ on $\mathcal{V}$. Moreover, in (a) we
have that $\Lambda_{A_i}\simeq\mathcal{H}_i$, while in (b) we have
that $\Lambda_{A_i}\simeq h_i\mathcal{H}_i$ for some positive
integer $h_i$. By adjunction formula, we obtain that
\begin{equation}
\mathcal{O}_{A_i}\simeq K_{A_i}+\tau_i\mathcal{H}_i\simeq \left\{
\begin{array}{lr}
(K_\mathcal{V}+A_i+\tau_i\Lambda)_{A_i}\quad \ \ \text{in case (a),} \\
(K_\mathcal{V}+A_i+\frac{\tau_i}{h_i}\Lambda)_{A_i} \quad \ \
\text{in case (b)}.
\end{array}
\right. \notag
\end{equation}
i.e.
$K_\mathcal{V}+A_i+\frac{\tau_i}{h_i}\Lambda\simeq\mathcal{O}_\mathcal{V}$
for some $h_i\geq 1$. If we put $K_\mathcal{V}=\rho\Lambda$ and
$A_i=a_i\Lambda$ for some integers $\rho$ and $a_i\geq 1$, then we
see that $\tau_i=-h_i(\rho+a_i)$, with $h_i=1$ when $n\geq 5$.

As to the case $n=3$, from $(\mathit{III})$ of Corollary
\ref{corollary ample} it follows that $\mathrm{Pic}(\mathcal{V})$
restricts injectively into $\mathrm{Pic}(A_i)=\mathbb{Z}$. Then by
arguing as above, we conclude that also in this situation
$\mathrm{Pic}(\mathcal{V})=\mathbb{Z}\langle\Lambda\rangle$ for some
ample line bundle $\Lambda$ on $\mathcal{V}$ and, with the same
notation as above, $\tau_i=-h_i(\rho+a_i)$, where $h_i$ and $a_i$
are positive integers.
\end{proof}

\begin{rem}
When $n\geq 6$, Proposition \ref{prop-3} generalizes {\em \cite[Theorem 1]{T}}.
\end{rem}

\subsection{All the $A_i$'s have small degrees}\label{small degree}

Denote now by $X$ an $n$-fold with $n\geq 3$ and by $L$ an ample line bundle on $X$.
In this subsection we prove the last result stated in the Introduction.

\smallskip

\noindent\textit{Proof of Proposition \ref{prop-1}}. Since
$[H_i]_{A_i}^{n-1}\leq 4$, by \cite[(8.10.1)]{BS} we see that
$(A_i,[H_i]_{A_i})$ satisfies one of the following two conditions:
\begin{enumerate}
\item[(a)] Pic$(A_i)=\mathbb{Z}\langle [H_i]_{A_i}\rangle$;
\item[(b)] $(A_i,[H_i]_{A_i})\cong
(\mathbb{P}^1\times\mathbb{P}^{3},\an_{\mathbb{P}^1\times\mathbb{P}^{3}}(1,1))$.
\end{enumerate}

\medskip

{\bf Step (I)}. First of all, assume that $r=1$. In Case (a), by Lefschetz's Theorem
we see that Pic$(X)=\mathbb{Z}\langle H_1\rangle$. Write
$A_1=a_1H_1$ for a positive integer $a_1$. Since
$a_1H_1^n=A_1H_1^{n-1}=[H_1]_{A_1}^{n-1}\leq 4$, we deduce that
$a_1H_1^n\leq 4$. Consider the map $\varphi :X\to\mathbb{P}^N$
associated to $|H_1|$. Since $|H_1|$ is ample and spanned, the
morphism $\varphi$ is finite and such that
$$[H_1]^n=\deg\varphi\cdot\deg\varphi(X)\leq 4.$$
This gives the following three possibilities:
\begin{enumerate}
\item[$(1)$] $\deg\varphi=1$,
$\deg\varphi(X)\leq 4$; \ $(2)$ $\deg\varphi=2$,
$\deg\varphi(X)\leq 2$; \
$(3)$ $\deg\varphi=3,4$ and
$\deg\varphi(X)=1$.
\end{enumerate}
In $(1)$, since $n\geq 5$ and $\mathrm{Pic}(X)=\mathbb{Z}\langle
H_1\rangle$, by \cite[(8.10.1)]{BS} (see also \cite{F1}, \cite{I2},
\cite{I}) we deduce that $(X,H_1)$ is one of the following pairs:
\begin{itemize}
\item $(\mathbb{P}^n,\mathcal{O}_{\mathbb{P}^n}(1))$, where $A_1\in
|\mathcal{O}_{\mathbb{P}^n}(a_1)|$ with $0<a_1\leq 4$; \item
$(\mathbb{Q}^n,\mathcal{O}_{\mathbb{Q}^n}(1))$, where $A_1\in
|\mathcal{O}_{\mathbb{Q}^n}(a_1)|$ with $0<a_1\leq 2$; \item $(V_d,
\mathcal{O}_{\mathbb{P}^{n+1}}(1)_{V_d})$ with $A_1\in |H_1|$, where
$V_d\subset\mathbb{P}^{n+1}$ is a smooth hypersurface of degree
$d=3,4$; \item $(W, \mathcal{O}_{\mathbb{P}^{n+2}}(1)_W)$ with
$A_1\in |H_1|$, where
$W=\mathbb{Q}_1\cap\mathbb{Q}_2\subset\mathbb{P}^{n+2}$ is a
complete intersection of two quadric hypersurfaces
$\mathbb{Q}_i\subset\mathbb{P}^{n+2}$ for $i=1,2$.
\end{itemize}
In $(2)$, the map $\varphi$ is a double cover of either (i)
$\mathbb{P}^n$, or (ii) $\mathbb{Q}^n\subset\mathbb{P}^{n+1}$. In
case (i), we see that $H_1=\varphi^*\mathcal{O}_{\mathbb{P}^n}(1)$
and $A_1\in |a_1H_1|$ with $a_1=1,2$. In case (ii), we get
$H_1=\varphi^*\mathcal{O}_{\mathbb{Q}^n}(1)$ and $A_1\in |H_1|$.
Finally, in $(3)$ the morphism $\varphi$ is a $d$-cover of
$\mathbb{P}^n$ with $d=3,4$, and
$A_1=H_1=\varphi^*\mathcal{O}_{\mathbb{P}^n}(1)$.

\smallskip

Finally, consider Case (b). Note that
$\mathrm{Pic}(X)\neq\mathbb{Z}$. Moreover, since $A_1$ is ample,
for any ample line bundle $H$ on $\mathbb{P}^1\times\mathbb{P}^3$, we have
$$0=\mathcal{O}_{\mathbb{P}^1\times\mathbb{P}^3}(1,0)\cdot\mathcal{O}_{\mathbb{P}^1\times\mathbb{P}^3}(2,0)\cdot H^2=
\mathcal{O}_{\mathbb{P}^1\times\mathbb{P}^3}(1,0)\cdot(K_{A_1}+4{H_1}_{A_1})\cdot H^2=$$
$$=\mathcal{O}_{\mathbb{P}^1\times\mathbb{P}^3}(1,0)\cdot
(K_X+4H_1+A_1)_{A_1}\cdot H^2\geq
\mathcal{O}_{\mathbb{P}^1\times\mathbb{P}^3}(1,0)\cdot
(K_X+5H_1)_{A_1}\cdot H^2,$$
i.e. $K_X+5H_1$ can not be ample on $X$.
So by \cite{I1}, \cite{F1} and
\cite[$\S 7.2$]{BS},
we see that $(X,H_1)$ is a scroll
over $\p^1$ of dimension five, i.e. $X\cong\mathbb{P}(\mathcal{E})$,
where $\mathcal{E}\cong \mathcal{O}_{\mathbb{P}^1}(a_1)\oplus
...\mathcal{O}_{\mathbb{P}^1}(a_5)$ with $1\leq a_1\leq ...\leq
a_5,$ and $H_1$ is the tautological line bundle $\xi$ of
$\mathbb{P}(\mathcal{E})$. Put $\xi'=\xi-a_1F$. Since $A_1$ is ample
on $X$, write $A_1=a\xi'+bF$ for some positive integers $a$ and $b$
(see \cite[(3.2.4)]{BS}). Then
$$[H_1]_{A_1}^{4}=(a\xi'+bF)(\xi)^4=a(\xi-a_1F)(\xi)^4+b=a(a_2+...+a_5)+b\geq
5,$$ but this is absurd. Thus Case (b) can not occur for $r=1$.

\medskip

{\bf Step (II)}. Suppose now that $r\geq 2$.
First of all, let us prove the following

\bigskip

\noindent\textit{Claim.} If $(A_{1},{H_1}_{A_{1}})\cong
(\mathbb{P}^1\times\mathbb{P}^{3},\an_{\mathbb{P}^1\times\mathbb{P}^{3}}(1,1))$,
then $K_{X}+4H_i$ is not nef for some $i=1,...,r$.

\smallskip

\noindent Suppose that $K_{X}+4H_k$ is nef for every $k=1,...,r$.
Let $\Phi _{1}:A_{1}\to\mathbb{P}^1$ be the nefvalue morphism of
$(A_{1},{H_1}_{A_{1}})$. Up to renaming, assume that
$A_{2},...,A_{s}$ are the only components of $A$ such that
$A_{1}\cap A_{h}\neq\varnothing $ for $h=2,...,s$ with $s\leq r$.
Put $h_{k}:=[A_{k}]_{A_{1}}$ for $k=2,...,r$ and note that $h_{t}$
is trivial for $t=s+1,...,r$.

First of all, if $\Phi _{1}(h_{k})$ is a union of points of
$\mathbb{P}^1$ for every $k=2,...,s$, then for a general fiber
$F\cong\mathbb{P}^3$ of $\Phi _{1}$ we have that
\begin{equation*}
(L_{A_{1}})_{F}=([A_{1}]_{A_{1}})_{F}+[h_{2}]_{F}+...+[h_{s}]_{F}=([A_{1}]_{A_{1}})_{F}=%
\mathcal{O}_{F}(a),
\end{equation*}
for a suitable integer $a\geq 1$. Moreover, note that
$K_{A_{1}}+4{H_1}_{A_{1}}\simeq 2F$.

Thus by adjunction we obtain that
$$[K_{X}+4H_1]_{F}{H_1}_{F}^{2}=([K_{X}+4H_1]_{A_{1}})_{F}({H_1}_{A_{1}})_{F}^{2}=$$
$$=(2F-[A_{1}]_{A_{1}})_{F}({H_1}_{A_{1}})_{F}^{2}=-([A_{1}]_{A_{1}})_{F}({H_1}_{A_{1}})_{F}^{2}
=-\mathcal{O}_{F}(a)\mathcal{O}_{F}(1)^{2}<0,$$ but this is absurd,
since $K_{X}+4H_1$ is nef and $H_1$ is ample on $X$.

Assume now that $\Phi _{1}(h_{t})=\mathbb{P}^1$ for some
$t=2,...,s$. If $\mathrm{Pic}(A_t)=\mathbb{Z}$, then by
\cite[(4.2)]{CHS} we get a contradiction by taking $B=A_t$ and
$A=A_1$. Thus we can assume that $(A_{t},{H_t}_{A_{t}})\cong
(\mathbb{P}^1\times\mathbb{P}^{3},\an_{\mathbb{P}^1\times\mathbb{P}^{3}}(1,1))$.
Note that there exists a general fiber $F$ of $\Phi _{1}$ such that
$F\nsubseteq h_{t}$ and $[h_{t}]_{F}$ is effective and not trivial.
Moreover, we have that Pic$(F)\cong\mathbb{Z}$ and $F\nsubseteq
A_{t}$, since otherwise $F\subseteq A_{1}\cap A_{t}=h_{t}$.
Therefore $F\cap A_{t}\neq \varnothing $ and $F\cap A_{t}\neq F$.
Since
\begin{equation*}
\dim F+\dim A_{t}-\dim\Phi_1(A_1)=3+4-1=6>5=\dim X,
\end{equation*}
by the same argument as in the proof of \cite[(4.2)]{CHS} with $B=F$
and $A=A_t$, we can conclude that $F\subseteq A_{t}$, but this gives
a contradiction. \hfill{Q.E.D.}

\bigskip

\noindent By the above Claim, if one of the $A_k$, say $A_1$, is
such that $$(A_{1},{H_1}_{A_{1}})\cong
(\mathbb{P}^1\times\mathbb{P}^{3},\an_{\mathbb{P}^1\times\mathbb{P}^{3}}(1,1)),$$
then $n=5$ and $K_{X}+4H_i$ is not nef for some $i=1,...,r$. Since
$\mathrm{Pic}(A_1)\neq\mathbb{Z}$,
by \cite{I1}, \cite{F1} and \cite[\S 7]{BS}
we deduce that $(X,H_i)$ is a $\mathbb{P}^4$-bundle over a smooth
curve $C$. Moreover, $A_1$ dominates $C$. So $C\cong\mathbb{P}^1$
and $X\cong\mathbb{P}(\mathcal{E})$ for some vector bundle
$\mathcal{E}$ of rank-5 on $\mathbb{P}^1$ such that
$\mathcal{E}\cong\mathcal{O}_{\mathbb{P}^1}\oplus\mathcal{O}_{\mathbb{P}^1}(a_1)\oplus
...\oplus\mathcal{O}_{\mathbb{P}^1}(a_4)$ with $0\leq a_1\leq
...\leq a_4$. Let $\xi$ be the tautological line bundle of
$\mathbb{P}(\mathcal{E})$. Write $A_1=a_1\xi+b_1F$ and
$H_1=\alpha_1\xi+\beta_1F$ with $b_1\geq 0$ and
$a_1,\alpha_1,\beta_1$ positive integers (see \cite[(3.2.4)]{BS}).
Furthermore, put $F_{A_1}\simeq\mathcal{O}_{A_1}(a,0)$ and
$\xi_{A_1}\simeq\mathcal{O}_{A_1}(b,c)$ with $a>0$ and $b,c\geq 0$.
By the adjunction formula we have that
$$\mathcal{O}_{A_1}(-2,-4)\simeq K_{A_1}\simeq
(K_X+A_1)_{A_1}\simeq [-5\xi
+(\deg\mathcal{E}-2)F+a_1\xi+b_1F]_{A_1}=$$
$$=(a_1-5)\xi_{A_1}+(b_1+\deg\mathcal{E}-2)F_{A_1}\simeq\mathcal{O}_{A_1}(b(a_1-5)+a(b_1+\deg\mathcal{E}-2),c(a_1-5)).$$
This gives the following two equations:
\begin{equation}
4=c(5-a_1) \tag{$\flat$}
\end{equation}
\begin{equation}
2=b(5-a_1)-a(b_1+\deg\mathcal{E}-2). \tag{$\natural$}
\end{equation}
Note that from $(\flat$) it follows that $1\leq a_1\leq 4$ and then
$$4\geq a_1=[A_1]_F\cdot [\xi]_F^3=F_{A_1}\cdot
{\xi}_{A_1}^3=\mathcal{O}_{A_1}(a,0)\cdot\mathcal{O}_{A_1}(b,c)^3=ac^3.$$
Thus we deduce that $c=1$ and $a_1=a$. By ($\flat$) we have also
that $a=a_1=1$ and the equation ($\natural$) gives
$4b=b_1+\deg\mathcal{E}$. Since
$$\mathcal{O}_{A_1}(1,1)\simeq
[H_1]_{A_1}=[\alpha_1\xi+\beta_1F]_{A_1}=\mathcal{O}_{A_1}(\alpha_1b+\beta_1,\alpha_1),$$
we see that $\alpha_1=1$ and $1=b+\beta_1\geq b+1\geq 1$, i.e. $b=0$
and $\beta_1=1$. Therefore, by $(\natural)$ we get
$0=4b=b_1+\deg\mathcal{E}\geq\deg\mathcal{E}$, i.e. $a_1=...=a_4=0$
and $b_1=0$. This shows that
$X\cong\mathbb{P}(\mathcal{O}_{\mathbb{P}^1}^{\oplus
5})\cong\mathbb{P}^1\times\mathbb{P}^4$, $A_1\in
|\mathcal{O}_X(0,1)|$ and $H_1\in |\mathcal{O}_X(1,1)|$.
Consider $A_i\in |\mathcal{O}_X(d,c)|$ and $H_i\in |\mathcal{O}_X(t,s)|$
for some $i=2,...,r$, where $d,c\geq 0$ and $t,s>0$. Then we have
$$4\geq [H_i]_{A_i}^4=\mathcal{O}_X(d,c)\cdot\mathcal{O}_X(t,s)^4=s^3(ds+4ct)\geq s^3.$$
This gives $s=1$ and $0\leq c\leq 1$. If $c=1$, then we get $d=0, t=s=c=1$ and $(A_i,[H_i]_{A_i})$
is of the same type as $(A_1,[H_1]_{A_1})$. If $c=0$, then we see that $1\leq d\leq 4$, $t\geq 1$ and $s=1$.
This shows that $A_i\in |\mathcal{O}_X(d,0)|$ and $H_i\in |\mathcal{O}_X(t,1)|$ with $t\geq 1$ and
$1\leq d\leq 4$, i.e. $A_i\to\mathbb{P}^4$ is a $d$-cover of $\mathbb{P}^4$ with $1\leq d\leq 4$.
Note that in this case we have
$$|\mathcal{O}_X(d,0)|=\underbrace{|\mathcal{O}_X(1,0)|+...+|\mathcal{O}_X(1,0)|}_{d-\mathrm{times}}.$$
Thus, since $A_i\in |\mathcal{O}_X(d,0)|$ is irreducible and reduced, we conclude that $d=1$.

\smallskip

Finally, since the above argument works independently from the choice of the
component $A_1$ of $A\in |L|$, we can assume, without loss of
generality, that every component $A_i$ of $A$ is such that
$\mathrm{Pic}(A_i)=\mathrm{Z}\langle [H_i]_{A_i}\rangle$. Then by
Proposition \ref{prop-3} and Corollary \ref{corollary ample}
(\textit{III}), we conclude that
$\mathrm{Pic}(X)=\mathbb{Z}\langle\Lambda\rangle$ for some ample
line bundle $\Lambda$ on $X$. Write $A_i=a_i\Lambda$ and
$H_i=b_i\Lambda$ for suitable positive integers $a_i$ and $b_i$.
Thus we obtain that
$$a_ib_i^{n-1}\Lambda^n=
(a_i\Lambda)(b_i\Lambda)^{n-1}=[H_i]_{A_i}^{n-1}\leq 4,$$ i.e.
$H_1=...=H_r=\Lambda$ and $a_i\Lambda^n\leq 4$. Consider the map
$\varphi:X\to\mathbb{P}^N$ associated to $|\Lambda|$. Since
$\Lambda$ is ample and spanned, we have
$\Lambda^n=\deg\varphi\cdot\deg\varphi (X)\leq 4$ and by arguing
as in case $r=1$, we obtain the statement. \hfill{$\square$}

\bigskip
\bigskip

\noindent\textbf{Acknowledgments.} The author would thank the
referee(s) for kind comments and useful suggestions which improved
the final form of this paper.

\bigskip
\bigskip

{\small

\bibliographystyle{amsplain}

}

\bigskip
\bigskip
\medskip

\noindent Departamento de Matematica, Universidad de Concepci\'on,
Casilla 160-C, Concepci\'on, Chile.

\medskip

\noindent E-mail:\ atironi@udec.cl

\end{document}